\numberwithin{equation}{section}
\newtheorem{theo}{Theorem}
\newtheorem{prop}{Proposition}
\theoremstyle{remark}
\newtheorem{Remark}{Remark}
\def\Qbar{\overline{\mathbb Q}}
\def\Z{\mathbb{Z}}
\def\Q{\mathbb{Q}}
\def\C{\mathbb{C}}
\def\({\left(}
\def\){\right)}
\def\[{\left[}
\def\]{\right]}
\newcommand{\calA}{\mathcal A}
\newcommand{\cali}{\mathcal I}
\newcommand{\Span}{{{\rm Span}}}
\newcommand{\Card}{{\rm Card} \,}
\newcommand{\unq}{\{1,\ldots,q\}}
\newcommand{\K}{\mathbb K}
\newcommand{\LL}{\mathbb L}
\newcommand{\asoul}{\underline{a}}
\begin{document}
\title[]
{A note on $G$-operators of order $2$}
\author[]{S. Fischler et T. Rivoal}

\begin{abstract} It is known that $G$-functions solutions of a linear differential equation of order 1 with coefficients in $\Qbar(z)$ are algebraic (of a very precise form). No general result is known when the order is $2$. 
In this paper, we determine the form of a $G$-function solution of an inhomogeneous equation of order 1 with coefficients in $\Qbar(z)$, as well as that of a $G$-function $f$ of differential order 2 over $\Qbar(z)$ and such that $f$ and $f'$ are algebraically dependent over $\mathbb C(z)$. Our results apply more generally to holonomic Nilsson-Gevrey arithmetic series of order 0 that encompass $G$-functions. 
\end{abstract}

\date{\today}

\address{
 S. Fischler, 
Universit\'e Paris-Saclay, CNRS, Laboratoire de math\'ematiques d'Orsay, 91405, Orsay, France.
\newline
\newline
\indent T. Rivoal, Institut Fourier, 
 CNRS et Universit\'e Grenoble Alpes, 
 CS 40700, 
 38058 Grenoble cedex 9, France.
}

\subjclass[2020]{33E20, 34M15, 34M35, 11J91}

\keywords{$G$-functions, Algebraic functions, Kovacic's classification, Fuchsian equations}

\maketitle

\section{Introduction}

We fix an embedding of $\Qbar$ into $\mathbb C$. A $G$-function is a power series 
$$
f(z)=\sum_{n=0}^{\infty} a_n z^n \in \Qbar[[z]]
$$ 
such that:
\begin{enumerate} 
\item[--] $f(z)$ satisfies a non-zero linear differential equation with coefficients in $\Qbar(z)$; 
\item[--] there exists $C>0$ such that 
for any $\sigma \in \textup{Gal}(\overline{\mathbb{Q}}/\mathbb Q)$, we have $\vert \sigma(a_n)\vert \le C^{n+1}$;
\item[--]  there exists a sequence of positive integers $d_n$ such that $d_n \le C^{n+1}$ and $d_n a_m$ is an algebraic integer for all $m\le n$.
\end{enumerate}
This class of arithmetic power series was defined by Siegel~\cite{siegel}. Given some sub-field $\mathbb L$ of $\mathbb C$, throughout the paper, by 
``solution of a differential operator $L \in \mathbb L(z)[\frac d{dz}]$'', it must be understood ``solution of the homogeneous linear differential equation $Ly(z)=0$''. We say that a non-zero solution of a differential operator in $\mathbb L(z)[\frac d{dz}]$ of order $\mu$ is ``of order $\mu$ over $\mathbb{L}(z)$'' if it is not solution of a differential operator in $\mathbb L(z)[\frac d{dz}]\setminus\{0\}$ of order $\le \mu-1$. (Note that $y(z)=z+\pi$ is of order 1 over $\mathbb C(z)$ but of order 2 over $\Qbar(z)$, because $(z+\pi)y'(z)-y(z)=0$ and $y''(z)=0$.) Amongst differential operators in $\Qbar(z)[\frac{d}{dz}]$ of which $G$-functions can be solutions, we distinguish $G$-operators (precisely defined in \S\ref{sec:1}); in fact, any $G$-function $f(z)$ is always solution of a $G$-operator, and this highly non-trivial fact implies many special properties of the minimal linear differential equation satified by $f(z)$ over $\Qbar(z)$ (fuchsianity, rationality of exponents, etc). It is also conjectured that the class of $G$-operators and the class of globally nilpotent operators in $\Qbar(z)[\frac{d}{dz}]$ coincide (see \cite{andre} for more on this).

Standard examples of $G$-functions are algebraic functions over $\Qbar(z)$ regular at the origin, as well as the generalized hypergeometric series ${}_{p+1}F_p[a_1,\ldots, a_{p+1}; b_1, \ldots, b_p;z]$ with rational parameters $a_j$ and $b_j$ (algebraic parameters are also possible in certain circumstances). It is not known if there is a way to algebraically express any $G$-function in terms of these two classes of functions only, although this has been the subject of some speculations and conjectures. For instance, Dwork conjectured in \cite{dwork} that any globally nilpotent operator in $\Qbar(z)[\frac{d}{dz}]$ of order 2 (hence conjecturally any $G$-operator of order 2) is a linear differential equation satisfied by an algebraic function over $\Qbar(z)$ or an algebraic pullback of Gauss' hypergeometric differential equation satisfied by ${}_2F_1[a,b;c;z]$ for some $a,b,c\in \Qbar$~(\footnote{According to this conjecture, a solution of a globally nilpotent operator could then be expressed as a $\mathbb C$-linear combination of functions of the form $\mu(z)F[z\lambda(z)]$ where $\mu(z), \lambda(z)\in \Qbar(z)$ are regular at $z=0$, and $F(z)$ is a solution of Gauss' hypergeometric equation.}). Dwork's conjecture has been disproved by Krammer \cite{krammer}, further counter-examples being given later on in \cite{bouw}. On a related note, Theorem 5 in \cite[\S 7]{firipadoue} shows that it is very unlikely that any $G$-function could be written as a polynomial with coefficients in $\Qbar$ of $G$-functions of the form $\mu(z)\cdot{}_{p+1}F_p[a_1, \ldots, a_{p+1}; b_1,\ldots, b_p ;z\lambda(z)]$, with $p \ge 0$, $a_j\in \mathbb Q$, $b_j\in \mathbb Q\setminus \mathbb Z_{\le 0}$, $\mu(z), \lambda(z)$ algebraic over $\Qbar(z)$ and regular at $z=0$. On the other hand, a folklore assertion is that $G$-functions can be obtained as suitable specialisations of the parameters and variables of the multivariate series known as $A$-hypergeometric functions (defined by Gelfand-Kapranov-Zelivinski, see \cite{gkz} and references there). This is already known to be true for algebraic functions over $\Qbar(z)$, see \cite{sturmfels} for instance.

To state our results, we first present a few results of the solutions of $G$-operators. 
Andr\'e introduced in \cite{andre} the class of  Nilsson-Gevrey arithmetic series of order 0, denoted by $\textup{NGA}\{0\}_0^{\mathbb C}$ (see \S\ref{sec:holo} for the definition). In this paper we consider only {\em holonomic}  Nilsson-Gevrey arithmetic series of order 0, and we prove the following result. By holonomic, we mean ``solution of a linear differential equation over $\mathbb C(z)$''.

\begin{prop} \label{propholo} Holonomic  Nilsson-Gevrey arithmetic series of order 0 are exactly the   functions of the form
\begin{equation}\label{eq:4}
\sum_{(\alpha,j,k)\in \mathbb S} \lambda_{\alpha,j,k} z^{\alpha} \log(z)^j f_{\alpha,j,k}(z)
\end{equation}
where  $\mathbb S$ is a finite subset of $\mathbb Q\times \mathbb N\times \mathbb N$, $\lambda_{\alpha,j,k}\in \mathbb C$ and each $f_{\alpha,j,k}(z)$ is a   $G$-function.
\end{prop}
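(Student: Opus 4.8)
The plan is to prove the two inclusions of Proposition~\ref{propholo} separately. The inclusion $\supseteq$ is straightforward: a function of the form \eqref{eq:4} is a finite $\mathbb{C}$-linear combination of products $z^{\alpha}\log(z)^{j}f_{\alpha,j,k}(z)$, and each factor $z^{\alpha}$ (algebraic over $\mathbb{C}(z)$, since $\alpha\in\mathbb{Q}$), $\log(z)^{j}$, and the $G$-function $f_{\alpha,j,k}$ is holonomic; since holonomicity is preserved under products and finite sums, \eqref{eq:4} is holonomic. Moreover it lies in $\textup{NGA}\{0\}_0^{\mathbb{C}}$ by the very definition recalled in \S\ref{sec:holo}, since a $G$-function is in particular an arithmetic Gevrey series of order $0$ with coefficients in $\Qbar$.

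For the converse, let $F\in\textup{NGA}\{0\}_0^{\mathbb{C}}$ be holonomic. By definition we may write $F=\sum_{(\alpha,j)}z^{\alpha}\log(z)^{j}g_{\alpha,j}(z)$, a finite sum in which each $g_{\alpha,j}\in\mathbb{C}[[z]]$ is a $\mathbb{C}$-linear combination of arithmetic Gevrey series of order $0$ with coefficients in $\Qbar$; absorbing suitable powers $z^{n}$ ($n\geq 0$) into the $g_{\alpha,j}$, we may assume the exponents $\alpha$ occurring are pairwise distinct modulo $\mathbb{Z}$. It suffices to prove that each $g_{\alpha,j}$ is a $\mathbb{C}$-linear combination of $G$-functions. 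First I would use monodromy to isolate one residue class: analytic continuation $T$ around $z=0$ commutes with any nonzero operator in $\mathbb{C}(z)[\frac{d}{dz}]$ annihilating $F$ (its coefficients being single-valued), so $F,TF,T^{2}F,\dots$ span a finite-dimensional $\mathbb{C}$-vector space $U$ of holonomic functions on which $T$ acts; since $T$ operates on the space of Nilsson germs with generalized eigenspaces indexed (via $\alpha\mapsto e^{2\pi i\alpha}$) by the residue classes of the exponents, the component $F_{\beta}$ of $F$ in a given class equals $q(T)F$ for a suitable polynomial $q$, hence is holonomic. As the exponents are distinct modulo $\mathbb{Z}$, each such $F_{\beta}$ has the single-exponent shape $z^{\alpha_{\beta}}\sum_{j}\log(z)^{j}g_{\alpha_{\beta},j}(z)$, so it suffices to treat $F=z^{\alpha}\sum_{j=0}^{J}\log(z)^{j}g_{j}(z)$; then $H:=z^{-\alpha}F=\sum_{j=0}^{J}\log(z)^{j}g_{j}(z)$ is still holonomic, and $(T-1)^{J}H=(2\pi i)^{J}J!\,g_{J}$ is a $\mathbb{C}$-linear combination of the holonomic functions $H,TH,\dots,T^{J}H$, so $g_{J}$ is holonomic over $\mathbb{C}(z)$; subtracting $\log(z)^{J}g_{J}$ and inducting on $J$ shows that every $g_{j}$ is holonomic over $\mathbb{C}(z)$.

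It remains to prove the following descent statement, which I expect to be the main difficulty (it may possibly be quoted from Andr\'e's work instead): if $g=\sum_{m=1}^{r}c_{m}h_{m}$ is holonomic over $\mathbb{C}(z)$, where each $h_{m}\in\Qbar[[z]]$ is an arithmetic Gevrey series of order $0$ and $c_{1},\dots,c_{r}\in\mathbb{C}$ are linearly independent over $\Qbar$ (which one may always arrange, by passing to a $\Qbar$-basis of the $\Qbar$-span of the original coefficients, the new summands still being arithmetic Gevrey of order $0$), then each $h_{m}$ is a $G$-function. The key point is that the orbit of $(c_{1},\dots,c_{r})$ under $\operatorname{Aut}(\mathbb{C}/\Qbar)$ spans $\mathbb{C}^{r}$: any $\mathbb{C}$-subspace of $\mathbb{C}^{r}$ stable under the coordinate-wise action of $\operatorname{Aut}(\mathbb{C}/\Qbar)$ admits a basis in $\Qbar^{r}$ (pass to reduced row echelon form, which is unique and whose pivot pattern is preserved by the action, and recall that the fixed field of $\operatorname{Aut}(\mathbb{C}/\Qbar)$ is $\Qbar$), so if the orbit failed to span there would be a nonzero relation $\sum_{m}a_{m}c_{m}=0$ with $a_{m}\in\Qbar$, contradicting the independence of the $c_{m}$. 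Choosing then $\sigma_{1},\dots,\sigma_{r}\in\operatorname{Aut}(\mathbb{C}/\Qbar)$ with $\det(\sigma_{i}(c_{m}))_{i,m}\neq 0$, and noting that each $\sigma_{i}(g)=\sum_{m}\sigma_{i}(c_{m})h_{m}$ is holonomic over $\mathbb{C}(z)$ (apply $\sigma_{i}$ to the differential equation of $g$), one inverts this matrix to express each $h_{m}$ as a $\mathbb{C}$-linear combination of the $\sigma_{i}(g)$; hence $h_{m}$ is holonomic over $\mathbb{C}(z)$, and since $h_{m}\in\Qbar[[z]]$ a standard descent (the linear system that a differential equation for $h_{m}$ must satisfy is defined over $\Qbar$ and has a nonzero solution over $\mathbb{C}$, hence over $\Qbar$) shows it is holonomic over $\Qbar(z)$, hence a $G$-function. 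Applying this to each $g_{j}$ above and reassembling $F$ yields the claim.
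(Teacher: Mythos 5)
Your proof is correct, and its overall architecture matches the paper's: first use the monodromy operator $T$ around $z=0$ to isolate, for each exponent class mod $\Z$ and each power of $\log$, the coefficient series $g_{\alpha,j}$ and show it is holonomic; then prove a descent lemma stating that if $\sum_m c_m h_m$ is holonomic with $h_m\in\Qbar[[z]]$ and $c_m$ linearly independent over $\Qbar$, then each $h_m$ is holonomic (whence a $G$-function, the arithmetic conditions being stable under $\Qbar$-linear combinations). The first step differs only cosmetically: you use spectral projections $q(T)$ and the operator $(T-1)^J$ to peel off the top $\log$-coefficient, while the paper (Proposition~\ref{propholo2}) runs an induction on $S=\sum_\alpha(1+J_\alpha)$ via $Tf-f$ and the invertibility of a block-triangular matrix; both rest on the fact that $T$ commutes with any annihilating operator in $\C(z)[\frac{d}{dz}]$. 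The genuine divergence is in the descent lemma. The paper fixes order and degree bounds, encodes holonomy of $\sum_k a_kh_k$ as the vanishing of all maximal minors of a matrix $M(\asoul)$ whose entries are $\Qbar$-linear in $\asoul$, and concludes that the locus $F$ of holonomic-making tuples is a linear subspace which is also an algebraic set defined over $\Qbar$, hence is spanned by $\Qbar$-points and must equal $\C^K$. You instead act by $\operatorname{Aut}(\C/\Qbar)$ on the Taylor coefficients: each $\sigma$ transports the differential equation, the Galois orbit of $(c_1,\ldots,c_r)$ spans $\C^r$ because its span is a $\sigma$-stable subspace (hence defined over $\Qbar$ by the uniqueness of reduced row echelon form and the fact that the fixed field is $\Qbar$) containing a tuple with $\Qbar$-independent coordinates, and inverting an $r\times r$ matrix of conjugates expresses each $h_m$ as a combination of holonomic series. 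Both descents are valid and use nothing arithmetic; yours is shorter and more classically Galois-theoretic but relies on wild automorphisms of $\C$ (hence the axiom of choice), whereas the paper's minor argument is effective and stays within linear algebra over $\Qbar$. The final reduction from holonomy over $\C(z)$ to holonomy over $\Qbar(z)$ for a series in $\Qbar[[z]]$ is the same standard linear-system argument in both.
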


In this definition, the determination of $\log(z)$ is arbitrary.  This notion could be  also considered at another point of $\mathbb C\cup\{\infty\}$ with obvious changes. It is known that any solution of a $G$-operator is in $\textup{NGA}\{0\}_0^{\mathbb C}$ (see \S\ref{sec:1}) and conversely Andr\'e proved that any holonomic element of $\textup{NGA}\{0\}_0^{\mathbb C}$ is solution of a $G$-operator (see \cite[p. 720]{andre}). We shall also consider the sub-class $\textup{NGA}\{0\}_0^{\Qbar}$ of $\textup{NGA}\{0\}_0^{\mathbb{C}}$ where
 the coefficients $\lambda_{\alpha,j,k}$ in Eq. \eqref{eq:4} are in $\Qbar$. Algebraic functions over $\mathbb{C}(z)$, respectively over $\Qbar(z)$, are in $\textup{NGA}\{0\}_0^{\mathbb{C}}$, respectively in $\textup{NGA}\{0\}_0^{\Qbar}$.

\bigskip

The goal of this paper is to  describe the holonomic elements of $\textup{NGA}\{0\}_0^{\mathbb C}$ of order 2 over $\Qbar(z)$ and subject to certain restrictions (Proposition \ref{coro:1} and Theorem \ref{theo:2} below). They are in agreement with the ``conjecture'' recalled above that $G$-functions should be specialisations of $A$-hypergeometric functions. 
The structure of the general  holonomic elements of $\textup{NGA}\{0\}_0^{\mathbb C}$ of order 2 over $\Qbar(z)$ remains unknown. Throughout the paper, $\int u(z) dz$ denotes a primitive of a function $u(z)$ where the arbitrary complex constant is not specified, while the definite integral $\int_{z_0}^z u(x) dx$ denotes the primitive of $u(z)$ that vanishes at $z=z_0$.

\bigskip

To begin with, let us consider the easy case of solutions of homogeneous operators of degree 1.
We explain in \S\ref{sec:1} why the non-zero elements of $\textup{NGA}\{0\}_0^{\mathbb C}$ solutions of an operator in $\Qbar(z)[\frac{d}{dz}]$ of order 1 are exactly the functions of the form
\begin{equation} \label{eq:1}
\delta\prod_{j\in J} (\lambda_j-z)^{s_j} 
\end{equation}
where $\delta\in \mathbb C$, $J$ is a finite set, $\lambda_j\in \Qbar$ and $s_j\in \mathbb Q$ for every $j\in J$. In particular, up to a multiplicative constant, these solutions are algebraic functions over $\Qbar(z)$.

\bigskip

Now we move to solutions of inhomogeneous operators of degree 1 (see \cite[pp. 398--399]{Zannier} for similar considerations in a particular case).
\begin{prop} \label{coro:1} Let $f(z)\notin \Qbar(z)$ be an element of $\textup{NGA}\{0\}_0^{\mathbb C}$ and solution of an inhomogeneous differential equation $f'(z)=a(z)f(z)+b(z)$ with $a(z)\in \Qbar(z)$ and $b(z)\in \Qbar(z)^*$. Then, $\frac{d}{dz}-a(z)$ is a $G$-operator and letting $g(z)\neq 0$ be one of its solutions, we have 
\begin{equation} \label{eq:3}
f(z)=g(z)\int \frac{b(z)}{g(z)}dz.
\end{equation}
\end{prop}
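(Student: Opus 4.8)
The plan is to first show that $\frac{d}{dz}-a(z)$ is a $G$-operator; granting this, \eqref{eq:3} follows at once by variation of constants. Indeed, if $g\neq 0$ satisfies $g'=ag$, then on any simply connected domain avoiding the singularities of $a$ and $b$ and the zeros of $g$ (or, if one prefers, formally in a Picard--Vessiot extension) one computes
$$\left(\frac{f}{g}\right)'=\frac{f'g-fg'}{g^2}=\frac{(af+b)g-f(ag)}{g^2}=\frac{b}{g},$$
so $f/g$ is a primitive of $b/g$, hence $f=g\int\frac{b}{g}\,dz$ once the unspecified constant in the primitive is chosen suitably. Thus the only real content of the statement is that $\frac{d}{dz}-a$ is a $G$-operator. (Once this is known, a nonzero solution $g$ automatically belongs to $\textup{NGA}\{0\}_0^{\mathbb C}$ and so has the explicit form \eqref{eq:1}, which makes $\int b/g\,dz$ concrete; but we do not need this for the proof of \eqref{eq:3}.)

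To prove that $\frac{d}{dz}-a$ is a $G$-operator, I would exhibit an order-$2$ operator over $\Qbar(z)$ annihilating $f$ and then use the stability of $G$-operators under taking right factors. Set $L_1=\frac{d}{dz}-a$ and $L_2=\frac{d}{dz}-\frac{b'}{b}$ (legitimate since $b\in\Qbar(z)^*$). Then $L_1f=b$ by hypothesis and $L_2b=0$, so the monic order-$2$ operator $M:=L_2L_1\in\Qbar(z)[\frac{d}{dz}]$ satisfies $Mf=0$. The key elementary observation is that $f$ is of order exactly $2$ over $\Qbar(z)$: it is of order $\le 2$ since $Mf=0$, and it cannot be of order $\le 1$, for otherwise $f$ (nonzero, since $f\notin\Qbar(z)$) would satisfy $f'=\tilde a f$ for some $\tilde a\in\Qbar(z)$, and comparison with $f'=af+b$ would give $(\tilde a-a)f=b\in\Qbar(z)^*$, forcing $\tilde a\neq a$ and $f=b/(\tilde a-a)\in\Qbar(z)$, a contradiction. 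Hence the minimal operator $L_f\in\Qbar(z)[\frac{d}{dz}]$ of $f$ has order $2$; since $M$ also has order $2$ and lies in the left ideal of operators annihilating $f$, we get $M=\rho L_f$ for some $\rho\in\Qbar(z)^*$, i.e. $L_f=\rho^{-1}M=(\rho^{-1}L_2)L_1$ with $\rho^{-1}L_2\in\Qbar(z)[\frac{d}{dz}]$.

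Now $f$ is holonomic (it satisfies $M$) and belongs to $\textup{NGA}\{0\}_0^{\mathbb C}$, so by Andr\'e's theorem (see \S\ref{sec:1} and \cite{andre}) $f$ is a solution of a $G$-operator $N$; then $L_f$ is a right factor of $N$, and since right factors of $G$-operators are again $G$-operators, $L_f$ is a $G$-operator. As $L_1=\frac{d}{dz}-a$ is a right factor of $L_f=(\rho^{-1}L_2)L_1$, it is in turn a right factor of a $G$-operator, hence a $G$-operator, which is what we wanted.

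The arithmetic core of the argument is entirely imported from Andr\'e's theory — namely, that a holonomic element of $\textup{NGA}\{0\}_0^{\mathbb C}$ satisfies a $G$-operator and that right factors of $G$-operators are $G$-operators — while the rest (the construction of $M$, the order-$2$ argument, and the variation-of-constants step) is elementary. I therefore do not anticipate a serious obstacle; the only point requiring a word of care is the meaning of $f/g$ near the zeros of $g$, which is handled by localizing or by passing to a Picard--Vessiot extension.
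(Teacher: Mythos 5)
Your proposal is correct and follows essentially the same route as the paper: you build the reducible order-$2$ operator $(\frac{d}{dz}-\frac{b'}{b})(\frac{d}{dz}-a)$ annihilating $f$, show $f$ is of order exactly $2$ over $\Qbar(z)$ by the same elementary comparison, invoke Andr\'e's result (which you unwind into its two ingredients, minimal operator as right factor plus stability of $G$-operators under factorization, exactly as the paper does in \S\ref{sec:1}) to get that $\frac{d}{dz}-a$ is a $G$-operator, and finish by variation of constants. The only cosmetic difference is that the paper packages the last two steps as an application of Proposition~\ref{theo:1} rather than carrying them out inline.
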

\begin{Remark} 
The assumptions of $f(z)$ ensure that it is of order 2 over $\Qbar(z)$ 
(see the details in the proof).

The important fact in Proposition \ref{coro:1} is that $\frac{d}{dz}-a(z)$ is a $G$-operator, which was not obvious {\em a priori}.

A partial converse of Proposition \ref{coro:1} holds. With $g(z)\neq 0$ a solution of a $G$-operator $\frac{d}{dz}-a(z)$ and $b(z)\in \Qbar(z)^*$, the right-hand side of \eqref{eq:3} is solution of the inhomogeneous equation $y'(z)=a(z)y(z)+b(z)$, and its generalized expansion at $z=0$ is in $\textup{NGA}\{0\}_0^{\mathbb C}$. But it is not guaranteed that it is not in $\Qbar(z)$.

The assumption $b(z)\neq 0$ is important for the proof but not really restrictive because \eqref{eq:1} gives the form of the elements of $\textup{NGA}\{0\}_0^{\mathbb C}$ solutions of an operator $\frac{d}{dz}-a(z)\in \Qbar(z)[\frac d{dz}]$. 
\end{Remark}

\bigskip

Let us state now our main result, which deals with solutions of homogeneous operators of degree 2 (with an additional assumption, namely that $f$ and $f'$ are algebraically dependent over $\mathbb C(z)$).

\begin{theo} \label{theo:2} Let $f(z)\neq 0$ be an element of $\textup{NGA}\{0\}_0^{\mathbb C}$, holonomic  
of order $
2$ over $\Qbar(z)$ 
and such that $f(z)$ and $f'(z)$ are algebraically dependent over $\mathbb C(z)$. Let $L\in \Qbar(z)[\frac{d}{dz}]$ of order 2 be such that $Lf(z)=0$. Then, at least one of the following assertions holds:

$(i)$ $f(z)$ is algebraic over $\mathbb C(z)$. More precisely, the differential equation $Ly(z)=0$ has a basis of solutions made of algebraic functions over $\Qbar(z)$.

$(ii)$ There exist two solutions $g(z)\neq 0, h(z)\neq 0$ of (possibly distinct) $G$-operators of order 1 
such that
\begin{equation} \label{eq:2b}
f(z)=g(z)\int h(z) dz.
\end{equation}
The functions $f(z)$ and $g(z)$ form a basis of solutions of the differential equation $Ly(z)=0$.
\end{theo}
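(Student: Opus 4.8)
The plan is to use the hypothesis that $f$ and $f'$ are algebraically dependent over $\mathbb C(z)$ to split into two cases via a standard fact about second-order linear differential equations. First I would set $w(z) = f'(z)/f(z)$, the logarithmic derivative, and observe that $w$ is algebraic over $\mathbb C(z)$: indeed from $Lf = 0$, writing $L = \frac{d^2}{dz^2} + p(z)\frac{d}{dz} + q(z)$ with $p,q\in\Qbar(z)$, we get the Riccati relation $w' + w^2 + p w + q = 0$, so $w'$ lies in $\mathbb C(z)[w]$; combined with the algebraic dependence of $f$ and $f'$ over $\mathbb C(z)$ (which says $w$ satisfies a polynomial relation over $\mathbb C(z)$, unless $f$ itself is in $\mathbb C(z)$, a subcase easily handled), one deduces that $\mathbb C(z)(w)$ is closed under $\frac{d}{dz}$ and is a finite extension of $\mathbb C(z)$. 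Differentiating the minimal polynomial of $w$ then forces its degree to be as small as possible; the key dichotomy (a Kovacic-type argument, essentially the order-2 Liouville/Kovacic case analysis) is that $w$ either has degree $1$ over $\mathbb C(z)$, i.e. $w\in\mathbb C(z)$, or $w$ is algebraic of higher degree and every solution of $Ly=0$ is algebraic.

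In the first branch, $w = f'/f \in \mathbb C(z)$. Then $f = \exp\!\int w(z)\,dz$, and since $f\in\textup{NGA}\{0\}_0^{\mathbb C}$, one must show $w\in\Qbar(z)$: this follows by looking at the generalized series expansion of $f$ at each point, whose local exponents/logarithmic data are rational and whose leading coefficients are algebraic, forcing the partial-fraction decomposition of $w$ to have algebraic poles and residues; hence $w\in\Qbar(z)$ and $\frac{d}{dz}-w$ is an order-1 operator annihilating $f$, contradicting that $f$ has order $2$ over $\Qbar(z)$ — unless one of the two ``elementary'' degenerate subcases occurs. The honest statement is: in this branch $f$ cannot be of order $2$, so this branch is vacuous, OR — and this is the route actually needed — the algebraic dependence of $f,f'$ should be re-examined: if $f\notin\mathbb C(z)$ but $f,f'$ are dependent, the correct conclusion is that $f$ is algebraic over $\mathbb C(z)$, which is case $(i)$. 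I would treat this carefully by noting that $f'/f$ algebraic over $\mathbb C(z)$ together with $f\notin\mathbb C(z)$ and $f$ holonomic implies (by integrating the algebraic logarithmic derivative) that $f$ is algebraic precisely when $f'/f$ has only simple poles with rational residues — otherwise $\exp\!\int w$ is not holonomic or not $\textup{NGA}$; the $\textup{NGA}\{0\}_0$ constraint then pins it down.

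For case $(ii)$: we are in the situation where $w = f'/f$ does not make $f$ algebraic, yet $f,f'$ are dependent. The remaining possibility from the Kovacic-type trichotomy for order-2 operators is that $Ly=0$ has a solution $g$ with $g'/g$ algebraic over $\mathbb C(z)$ — in fact, by the argument above, $g'/g \in \Qbar(z)$ and hence, using that $g$ is a solution of a $G$-operator (since $L$ is a $G$-operator, being the minimal operator of a $G$-function up to the $\textup{NGA}$ twist, via Proposition~\ref{propholo} and André's theorem), $\frac{d}{dz} - g'/g$ is itself a $G$-operator, and $g$ is of the form \eqref{eq:1}. Given such a solution $g$, reduction of order writes the general solution of $Ly=0$ as $g(z)\big(c_1 + c_2\int h(z)\,dz\big)$ where $h = W/g^2$, $W$ being the Wronskian of $L$; since $W = \exp\!\left(-\int p(z)\,dz\right)$ with $p\in\Qbar(z)$ and $g$ as in \eqref{eq:1}, $h(z)$ is again (up to constant) an algebraic function over $\Qbar(z)$ annihilated by an order-1 $G$-operator $\frac{d}{dz} - h'/h$. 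Because $f$ has order $2$ over $\Qbar(z)$, $f$ is not a constant multiple of $g$, so $f = g(z)\int h(z)\,dz$ after absorbing constants, and $\{f,g\}$ is a basis of solutions. This yields \eqref{eq:2b}.

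The main obstacle is the passage from ``algebraic over $\mathbb C(z)$'' to ``over $\Qbar(z)$'' and from ``order-1 operator with coefficients in $\mathbb C(z)$'' to ``$G$-operator'': one must exploit that $f$ lies in $\textup{NGA}\{0\}_0^{\mathbb C}$ — that is, is a $\Qbar$-or-$\mathbb C$-combination of $z^\alpha\log(z)^j$ times $G$-functions with $\alpha\in\mathbb Q$ — to force the local data of $w = f'/f$ (or $g'/g$) to be defined over $\Qbar$ with rational exponents, and then invoke that $\frac{d}{dz}-a(z)$ with $a\in\Qbar(z)$ having a solution of the shape \eqref{eq:1} is a $G$-operator (it is globally nilpotent / has the right local monodromy). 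A secondary subtlety is making the Kovacic-type dichotomy rigorous without a full invocation of Kovacic's algorithm: the clean way is to say that if $f,f'$ are algebraically dependent and $f$ is not algebraic over $\mathbb C(z)$, then the differential Galois group of $L$ is solvable but not finite, hence (in dimension 2) is contained in the Borel subgroup up to finite index, which is exactly the assertion that $Ly=0$ has a one-dimensional invariant subspace spanned by some $g$ with $g'/g$ algebraic — giving the reduction-of-order setup above.
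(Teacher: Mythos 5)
The overall architecture you aim for (a Kovacic-type trichotomy, then reduction of order for case $(ii)$) is the same as the paper's, but your opening step is false and this undermines the whole case analysis. From $P(f,f')=0$ with $P\in\mathbb C(z)[X,Y]$ you cannot conclude that $w=f'/f$ is algebraic over $\mathbb C(z)$: take $f=\log z$, which satisfies all the hypotheses of the theorem (it is in $\textup{NGA}\{0\}_0^{\mathbb C}$, of order $2$ over $\Qbar(z)$, and $f'=1/z$ is rational, so $f,f'$ are trivially dependent), yet $f'/f=1/(z\log z)$ is transcendental over $\mathbb C(z)$. What the algebraic dependence of $f$ and $f'$ actually gives is that the differential Galois group of $L$ does not contain $\textup{SL}_2$; Kovacic's theorem then produces \emph{some} solution $g$ whose logarithmic derivative is rational (or algebraic of degree $2$, or a finite group) --- not that the logarithmic derivative of the given $f$ is algebraic. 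Your last paragraph partially recovers this correct formulation, but the ``first branch'' of your dichotomy (``$w\in\mathbb C(z)$, hence contradiction or $f$ algebraic'') is built on the wrong object and its conclusion (``if $f\notin\mathbb C(z)$ and $f,f'$ are dependent then $f$ is algebraic'') is again refuted by $f=\log z$.

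Two further substantive gaps remain even after repairing the dichotomy. First, the descent from $\mathbb C(z)$ to $\Qbar(z)$: to run Kovacic's classification with field of constants $\Qbar$ one needs to know that the Galois group over $\Qbar(z)$ is obtained from the one over $\mathbb C(z)$ by extension of scalars; the paper invokes Gabber's theorem (Proposition 1.3.2 in Katz's paper) for exactly this, and your sketch of reading off ``algebraic poles and residues'' from generalized expansions is not a substitute. Second, case $(i)$ is where the real work lies and your proposal does not do it: in Kovacic's Cases 2 and 3 one must show that the basis of solutions is algebraic over $\Qbar(z)$ (up to multiplicative constants). In Case 3 (logarithmic derivatives quadratic over $\Qbar(z)$) the paper must normalize the solution, prove it lies in $\textup{NGA}\{0\}_0^{\Qbar}$, observe that its inverse solves the same auxiliary $G$-operator, and then apply Andr\'e's scholie to conclude algebraicity; none of this is present or replaceable by the remark that ``every solution of $Ly=0$ is algebraic'', which in any case only gives algebraicity over $\mathbb C(z)$, not the stated stronger conclusion over $\Qbar(z)$.
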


\begin{Remark}
Assertions $(i)$ and $(ii)$ can hold simultaneously. The function $f(z)=\pi\sqrt{z}-e$ (with its principal branch) is of order 2 over $\Qbar(z)$ with $L=2z\frac{d^2}{dz^2}+\frac{d}{dz}$ (and 1, $\sqrt{z}$ as a basis), and we can take $g(z)=1$ and $h(z)=\pi/(2\sqrt{z})$ in $(ii)$. On the other hand, a function $f(z)$ as in \eqref{eq:2b} can be transcendental over $\mathbb C(z)$ with $f(z), f'(z)$ algebraically dependent over $\mathbb{C}(z)$: take $g(z)=1$ and $h(z)=1/z$ for instance.

A partial converse of Theorem \ref{theo:2} holds. Concerning $(i)$: any function $f(z)$ algebraic over $\mathbb C(z)$ is in $\textup{NGA}\{0\}_0^{\mathbb C}$ and obviously such that $f(z)$ and $f'(z)$ are algebraically dependent over $\mathbb C(z)$. Concerning $(ii)$: let $f(z)$ be as in \eqref{eq:2b} with $g(z)\neq 0, h(z)\neq 0$ solutions of $G$-operators of order 1 (hence both algebraic over $\mathbb C(z)$). Then 
 its generalized expansion at $z=0$ is in $\textup{NGA}\{0\}_0^{\mathbb C}$. Moreover $f(z)$ satisfies $f'(z)-a(z)f(z)=h(z)g(z)$ where $a(z)\in \Qbar(z)$ (such that $g'(z)=a(z)g(z)$) and $h(z)g(z)$ is algebraic over $\mathbb C(z)$ and of order~1 over $\Qbar(z)$. Hence, $f(z)$ and $f'(z)$ are algebraically dependent over $\mathbb C(z)$, and $f(z)$ is of order $\le 2$ over $\Qbar(z)$. 

Under the assumptions of Theorem \ref{theo:2}, $f(z)$ turns out to be a Liouvillian solution of the $G$-operator $L$. Conversely, the proof of the theorem (based on Kovacic's classification) shows that any Liouvillian solution of a $G$-operator $L$ of order 2 is either algebraic over $\mathbb C(z)$ or of the form \eqref{eq:2b}.

It is not easy to state simple necessary and sufficient conditions ensuring that the right-hand side of \eqref{eq:2b} is a $G$-function, not merely 
a holonomic  element of $\textup{NGA}\{0\}_0^{\mathbb C}$, because there are many possible situations. We can write $g(z)=\delta z^{\alpha}\widetilde{g}(z)$ and $h(z)=\omega z^{\beta}\widetilde{h}(z)$ where $\delta, \omega\in \mathbb C^*$, $\alpha, \beta\in \mathbb Q$ and $\widetilde{g}(z), \widetilde{h}(z)$ are $G$-functions such that $\widetilde{g}(0)\widetilde{h}(0)\neq 0$. Let $\widetilde{h}(z)=\sum_{n=0}^\infty a_n z^n$. If we assume for instance that $\beta>-1$, then a necessary and sufficient condition for $g(z)\int_0^z h(x) dx$ to be a $G$-function is that $\alpha+\beta\in \mathbb Z_{\ge -1}$ and $\delta\omega\in \Qbar$. Indeed, we have $g(z)\int_0^z h(x) dx=\delta\omega z^{\alpha+\beta+1} \widetilde{g}(z)\sum_{n=0}^\infty \frac{a_n z^n}{\beta+n+1}$ with $\frac{\widetilde{g}(0)a_0}{\beta+1}\neq 0$.
\end{Remark}

\medskip

To conclude, we mention that similar questions have already been addressed for $E$-functions in Siegel's original sense. $E$-functions of order 1 over $\Qbar(z)$ have been determined by Shidlovskii \cite[p.~184]{shid}. Building upon a remark in \cite[p.~724, \S4.5]{andre}, $E$-functions solutions of inhomogeneous differential equations of order 1 over $\Qbar(z)$ have been classified by Gorelov \cite{gorelov1}, a result reproved in \cite{rivroq} for $E$-functions in the restricted sense. Gorelov eventually classified $E$-functions of order 2 over $\Qbar(z)$ in \cite{gorelov2}, and a different proof was also given in \cite{rivroq2} for $E$-functions in the restricted sense. This classification involves only ${}_1F_1$ hypergeometric series with rational parameters. We also emphasize that the above Theorem \ref{theo:2} is an analogue of Theorem 3 of \cite{rivroq}, and we drew inspiration of the proof of the latter (based on Kovacic's theorem \cite{kovacic} adapted to $\Qbar(z)$) to prove the former; the main difference is that in the proof of Theorem \ref{theo:2} in \S\ref{sec:2} below, Cases 1, 2 and 3 can happen in Kovacic's classification, while in the proof of \cite[Theorem 3]{rivroq} only Case 1 can happen. It does not seem that the methods of \cite{gorelov2} or \cite{rivroq2} can be easily adapted to classify $G$-functions of order 2 over $\Qbar(z)$.

\medskip

The structure of this paper is as follows. In \S \ref{sec:1} we recall the results we shall use on $G$-operators, and study the solutions in $\textup{NGA}\{0\}_0^{\mathbb C}$ of $G$-operators of order 1, and those of $G$-operators of order 2 reducible over $\Qbar(z)$. 
We  prove Proposition \ref{coro:1}, and also  that any $L\in \Qbar(z)[\frac d{dz}]$ is of minimal order over $\Qbar(z)$ for one of its solutions (viewed as an element of a Picard-Vessiot extension of $Ly(z)=0$ over $\mathbb C(z)$) -- a result that does not hold with $\Qbar(z)$ replaced by $\mathbb{C}(z)$. 
In \S \ref{sec:2} we prove   Theorem \ref{theo:2}. At last in \S \ref{sec:holo} we prove 
Proposition \ref{propholo}, and 
 in \S \ref{sec:3} an independent result that we have not found in the literature: any function algebraic over $\C(z)$ and holonomic over $\Qbar(z)$ is a $\C$-linear combination of functions algebraic over $\Qbar(z)$.

\medskip

\noindent {\bf Acknowledgements.} We thank C. Hardouin, J. Roques and J.-A. Weil for patiently answering our questions on differential Galois theory (in particular for pointing a crucial result in \cite{katz87}) and Y. Andr\'e for confirming to us that a result of his on $G$-functions (scholie in \cite[p.~123]{andrelivre}) can be extended to holonomic elements of $\textup{NGA}\{0\}_0^{\Qbar}$ with the same proof. We also thank M. Singer for his comments on our Proposition \ref{propalg} in the final part of the paper, and for sending us his own alternative proof of it (not reproduced here). Finally, let us mention that the starting point of this paper was a question of the referee of our paper \cite{firipadoue}: he asked us if something could be said of $G$-functions solutions of inhomogeneous equations of order~1 over $\Qbar(z)$. Both authors have partially been funded by the ANR project De Rerum Natura (ANR-19-CE40-0018).

\section{Some results on $G$-operators} \label{sec:1}

Consider a differential system $Y'(z)=A(z)Y(z)$ with $A(z)\in M_{s\times s}(\Qbar(z))$. It is immediate that $Y^{(n)}=A_n(z)Y(z)$ where the sequence of matrices $(A_n)_{n\ge 1}$ is defined by $A_{n+1}=A_nA_1+A_n'$, $A_1:=A$. 

Let $T(z)\in \Qbar[z]\setminus\{0\}$ (of minimal degree) such that $T(z)A(z)$ has entries in $\Qbar[z]$. It is easy to check by induction that, for every $n$, $T(z)^n A_n(z)$ has entries in $\Qbar[z]$. Let $D_k\ge 1$ denote the least integer such that $\frac{D_k T(z)^n}{n!}A_n(z)$, $n=1, \ldots, k$, all have entries in $\mathcal{O}_{\Qbar}[z]$. We say that $A(z)$ satisfies Galochkin's condition if $D_k$ has at most geometric growth (see \cite{galochkin}). We say that the differential system $Y'(z)=A(z)Y(z)$ is a $G$-operator when $A(z)$ satisfies Galochkin's condition. By extension, a differential operator in $\mathbb C(z)[\frac d{dz}]$ is said to be a $G$-operator when its companion differential system is a $G$-operator; in particular there exists $p(z)\in \mathbb C(z)$ such that $p(z)L\in \Qbar(z)[\frac d{dz}]$ and there is no loss of generality in considering that $G$-operators are in $\Qbar(z)[\frac d{dz}]$.

If $L_1, L_2\in \Qbar(z)[\frac d{dz}]$ are $G$-operators, then $L_1L_2$ is a $G$-operator. Conversely, if $L\in \Qbar(z)[\frac d{dz}]$ is a $G$-operator that can be factorized as $L=L_1L_2$ with $L_1, L_2\in \Qbar(z)[\frac d{dz}]$, then $L_1$ and $L_2$ are $G$-operators. See \cite{andrelivre} or \cite[p.~16, Corollary~2]{lepetit} for a proof.

Andr\'e \cite{andrelivre} proved that Galochkin's condition is equivalent to another one introduced by Bombieri \cite{bombieri}. Bombieri's condition and a result of Katz imply that a $G$-operator is fuchsian with rational exponents. Moreover, various estimates from the theory of $p$-adic differential equations imply that at any point of $\Qbar\cup \{\infty\}$, a $G$-operator has a local basis of solutions (essentially) made of $G$-functions. In particular and more precisely, the local solutions at $z=0$ of a $G$-operator are in $\textup{NGA}\{0\}_0^{\mathbb C}$. The converse is true by a theorem of Andr\'e quoted below.

It is difficult to prove that a differential operator is $G$-operator because Galochkin's condition can be hard to verify. Chudnovsky \cite{chud} proved the following sufficient condition: if $L\in \Qbar(z)[\frac{d}{dz}]\setminus \{0\}$ is of minimal order over $\Qbar(z)$ for some $G$-function, then $L$ is a $G$-operator. 
In \cite[p.720]{andre}, Andr\'e extended Chudnovsky's theorem: any holonomic element of $\textup{NGA}\{0\}_0^{\mathbb C}$ is solution of a $G$-operator. Therefore, if $L\in \Qbar(z)[\frac{d}{dz}]\setminus \{0\}$ is of minimal order over $\Qbar(z)$ for some element in $\textup{NGA}\{0\}_0^{\mathbb C}$, it is a $G$-operator. We will mention this result as {\em Andr\'e's minimality theorem} in the rest of the paper.

A complete characterization of $G$-operators of order $\ge 2$ is not known, but this can be done when the order is $1$. 
\begin{prop}\label{prop:1} 
$(i)$ If a non-zero element $y(z)$ of $\textup{NGA}\{0\}_0^{\mathbb C}$ is a solution of $L\in \Qbar(z)[\frac d{dz}]$ of order~1, then $L$ is a $G$-operator and we have 
$y(z) = 
\delta\prod_{j\in J} (\lambda_j-z)^{s_j} 
$
where $\delta\in \mathbb C^*$, $J$ is a finite set, $\lambda_j\in \Qbar$ and $s_j\in \mathbb Q$ for every $j\in J$. 

$(ii)$ $G$-operators of order 1 are exactly the differential operators in $\Qbar(z)[\frac{d}{dz}]$ of order 1 which are fuchsian and with rational exponents. 
\end{prop}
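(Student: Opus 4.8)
The plan is to prove Proposition \ref{prop:1} by analyzing order-1 operators directly and invoking the structural results already available. I start with part $(i)$. Write $L = \frac{d}{dz} - a(z)$ with $a(z) \in \Qbar(z)$ (after normalizing the leading coefficient, which costs nothing for an order-1 operator). The homogeneous equation $y' = a(z)y$ has the formal solution $\exp\!\left(\int a(z)\,dz\right)$, and since $y(z)$ is a non-zero solution in a Picard–Vessiot extension, it equals this exponential up to a constant. Now decompose $a(z)$ into partial fractions over $\Qbar$: $a(z) = p(z) + \sum_{j\in J} \frac{c_j}{z-\lambda_j} + (\text{higher-order pole terms})$ with $p(z)\in\Qbar[z]$, $\lambda_j\in\Qbar$, $c_j\in\Qbar$. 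Integrating, $\int a(z)\,dz$ is a polynomial plus $\sum_j c_j \log(z-\lambda_j)$ plus a proper rational function (coming from the higher-order poles). Hence $y(z)$ is, up to a constant, $e^{P(z)} \cdot R(z) \cdot \prod_{j\in J}(z-\lambda_j)^{c_j}$ with $P\in\Qbar[z]$ and $R\in\Qbar(z)$. The key step is then: since $y(z)\in\textup{NGA}\{0\}_0^{\mathbb C}$, it has an asymptotic/growth behaviour forcing $P(z)\equiv \text{const}$ (a genuine exponential polynomial of positive degree is not of Gevrey order $0$, equivalently not dominated by any Nilsson-type expansion with algebraic exponents and polynomial-in-$\log$ coefficients — this is exactly the feature that distinguishes $G$-function-type behaviour from $E$-function-type behaviour) and also forces the proper rational part $R(z)$ to be absent, i.e. $a(z)$ has only simple poles (a solution $e^{1/z}$-type term has an essential singularity incompatible with fuchsianity / membership in $\textup{NGA}\{0\}_0^{\mathbb C}$ at that point). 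After these exclusions one is left with $y(z) = \delta\prod_{j\in J}(\lambda_j - z)^{s_j}$ with $s_j = c_j \in \mathbb Q$; the rationality of the $s_j$ follows because the exponents of a $G$-operator are rational — but at this stage we do not yet know $L$ is a $G$-operator, so instead I would argue rationality directly: the element $y(z)\in\textup{NGA}\{0\}_0^{\mathbb C}$, so near $z=0$ it is a finite $\mathbb C$-combination of $z^\alpha\log(z)^j f_{\alpha,j,k}(z)$ with $\alpha\in\mathbb Q$, which pins down the local exponent at $0$ (namely $\sum_{j:\lambda_j=0} s_j$, plus $0$ if $0\notin\{\lambda_j\}$) to be rational; applying the same reasoning at every finite singular point $\lambda_j$ (using the remark in the excerpt that $\textup{NGA}$ can be considered at any point of $\mathbb C\cup\{\infty\}$, combined with the fact that $y$ is algebraic hence its expansions at all points are of the required form) gives $s_j\in\mathbb Q$ for all $j$. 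Then $y(z)$ is algebraic over $\Qbar(z)$, and since it is of minimal order $1$ for itself as an element of $\textup{NGA}\{0\}_0^{\mathbb C}$, Andr\'e's minimality theorem gives that $L$ is a $G$-operator.

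For part $(ii)$, one inclusion is immediate from the general facts recalled in \S\ref{sec:1}: any $G$-operator is fuchsian with rational exponents. For the converse, suppose $L = \frac{d}{dz} - a(z) \in \Qbar(z)[\frac{d}{dz}]$ is fuchsian with rational exponents. Fuchsianity at every finite point and at infinity forces $a(z)$ to have only simple poles (higher-order poles produce essential singularities in $\exp\int a$, contradicting fuchsianity) and forces the polynomial part of $a(z)$ to vanish (a nonzero polynomial part gives an irregular singularity at infinity). Writing then $a(z) = \sum_{j\in J}\frac{s_j}{z-\lambda_j}$, the exponent at $\lambda_j$ is $-s_j$ and at infinity is $\sum_j s_j$; the rational-exponents hypothesis gives $s_j\in\mathbb Q$. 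Hence a solution is $y(z) = \prod_{j\in J}(\lambda_j - z)^{-s_j}$, algebraic over $\Qbar(z)$, regular at any non-singular point, so (shifting to a regular point, or directly) $y$ is a $G$-function up to a constant and $L$ is its minimal operator; by Chudnovsky's theorem (or Andr\'e's minimality theorem) $L$ is a $G$-operator. This closes the equivalence.

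I expect the main obstacle to be the rationality of the exponents $s_j$ in part $(i)$: before knowing $L$ is a $G$-operator we cannot quote ``$G$-operators have rational exponents'', so we must extract rationality from membership in $\textup{NGA}\{0\}_0^{\mathbb C}$ alone. The cleanest route is probably the two-step argument above — first use the $\textup{NGA}$ structure at $z=0$ to get rationality of the exponent there, then observe that once $y$ is seen to be of the form $\delta e^{P}R\prod(\lambda_j-z)^{c_j}$ with $P$ constant and $R$ trivial, the function is algebraic precisely when all $c_j\in\mathbb Q$, and algebraicity (equivalently, single-valuedness up to finite monodromy) is itself forced because an $\textup{NGA}\{0\}_0^{\mathbb C}$ element has, at each point, a local expansion with finitely many rational exponents and bounded powers of $\log$, which is incompatible with an irrational exponent $c_j$ (that would require infinitely many distinct exponents $c_j + \mathbb Z$ among the branches, or rather a single irrational one, contradicting the explicit shape once one tracks monodromy). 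A careful write-up should phrase this as: the formal solution $e^{\int a}$ lies in $\textup{NGA}\{0\}_0^{\mathbb C}$, its Puiseux-type expansion at $0$ has a unique exponent which must be rational, hence $\sum_{j:\lambda_j=0}c_j\in\mathbb Q$; repeating at each $\lambda_j$; and ruling out polynomial and higher-pole parts by the incompatibility of exponential growth / essential singularities with the Gevrey-order-$0$, fuchsian-type local behaviour of $\textup{NGA}\{0\}_0^{\mathbb C}$ elements. Everything else is bookkeeping with partial fractions and integration.
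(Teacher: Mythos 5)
Your part $(ii)$ is essentially the paper's argument and is fine. Part $(i)$, however, rests on a false premise and consequently contains real gaps. You write that ``at this stage we do not yet know $L$ is a $G$-operator,'' but in fact you do, immediately: since $L$ has order $1$ and $y(z)\neq 0$, the only way $L$ could fail to be of minimal order for $y$ over $\Qbar(z)$ is if $y$ were annihilated by a non-zero order-$0$ operator, i.e.\ $y=0$. So minimality is automatic, Andr\'e's minimality theorem applies at the outset, and $L$ is a $G$-operator \emph{before} any computation. This is exactly the paper's proof: once $L$ is known to be a $G$-operator it is fuchsian with rational exponents, hence (after making it monic) of the form $\frac{d}{dz}-\sum_{j\in J}\frac{s_j}{\lambda_j-z}$ with $\lambda_j\in\Qbar$, $s_j\in\Q$, and the stated form of $y$ follows by one integration. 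You do invoke the minimality theorem, but only at the very end, after a long detour that the theorem renders unnecessary.

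The detour itself is where the gaps lie. Each of your three exclusions --- no exponential polynomial factor $e^{P(z)}$, no higher-order poles of $a(z)$, rationality of the residues $c_j$ --- is a genuine claim about $\textup{NGA}\{0\}_0^{\mathbb C}$ that you assert with heuristics (``not of Gevrey order $0$,'' ``incompatible with fuchsianity / membership in $\textup{NGA}\{0\}_0^{\mathbb C}$ at that point'') rather than proofs. Note in particular that membership in $\textup{NGA}\{0\}_0^{\mathbb C}$ is, by definition, a condition on the generalized expansion at $z=0$ only; it says nothing directly about the local behaviour at a point $\lambda_j\neq 0$ or at $\infty$, so ruling out an essential singularity of $e^{1/(z-\lambda_j)}$ type, or exponential growth at $\infty$, cannot be read off the definition. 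The standard way to control the other singular points is precisely the chain ``holonomic $\textup{NGA}\{0\}_0^{\mathbb C}$ element $\Rightarrow$ solution of a $G$-operator $\Rightarrow$ fuchsian with rational exponents everywhere,'' i.e.\ the very facts your route is designed to avoid. Your monodromy argument for rationality of the exponents at points other than $0$ has the same circularity. The fix is not to patch these steps but to reorder the proof: apply Andr\'e's minimality theorem first, then deduce the normal form of $L$ and hence of $y$.
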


\begin{proof} First of all, if $y(z)$ is a non-zero element of $\textup{NGA}\{0\}_0^{\mathbb C}$ and $L\in \Qbar(z)[\frac{d}{dz}]$ is a differential operator of order 1 such that $Ly(z)=0$, then 
 $L$ is clearly of minimal order over $\Qbar(z)$ for $y(z)$. 
 By Andr\'e's minimality theorem (see also Remark \ref{rem3} below), $L$ is a $G$-operator.

\medskip

Now consider $L\in \Qbar(z)[\frac{d}{dz}]$ of order 1 and fuchsian with rational exponents. Without loss of generality, we assume that $L$ is monic, ie that 
$$
L=\frac{d}{dz}-\sum_{j\in J} \frac{s_j}{\lambda_j-z} 
$$
with $\lambda_j\in \Qbar$ (pairwise distinct) and $s_j\in \mathbb Q$ for all $j\in J$
(see \cite[Lemma 6.11, p. 174]{VDPSinger}).
 Hence, the solutions of $L$ are of the form
\begin{equation*} 
\delta\prod_{j\in J} (\lambda_j-z)^{s_j} \in \textup{NGA}\{0\}_0^{\mathbb C}
\end{equation*}
where $\delta\in \mathbb C$, $J$ is a finite set, $\lambda_j\in \Qbar$ and $s_j\in \mathbb Q$ for every $j\in J$. (If $J=\emptyset$, the value of the product is $1$.) Since $L$ is minimal over $\Qbar(z)$ for the non-zero solution with $\delta=1$, as above we deduce that $L$ is a $G$-operator. Since every $G$-operator is fuchsian with rational exponents, this concludes the proof.
\end{proof}

\begin{Remark}\label{rem3}
Andr\'e's minimality theorem is a general result for differential operators of arbitrary orders, and it can be avoided in this particular situation. Let us prove directly that Galochkin's condition holds for the operator $\frac{d}{dz}-A_1(z)$ with 
$
A_1(z):=\sum_{j\in J} \frac{s_j}{\lambda_j-z}.
$
Recall that we define a sequence of matrices $(A_n)_{n\ge 1}$ by $y^{(n)}(z)=A_n(z)y(z)$ where $y(z)$ is any solution of $\frac{d}{dz}-A_1(z)$ (the sequence is independent of the solution). Let $J=\{1, \ldots, p \}$. Taking 
$y(z)=\prod_{j=1}^p (\lambda_j-z)^{s_j}$, Leibniz's formula shows 
that 
$$
A_n(z)= n! \sum_{n_1+\cdots +n_p=n} \Big(\prod_{j=1}^p \frac{(-s_j)_{n_j}}{n_j!}\frac{1}{(\lambda_j-z)^{n_j}}\Big).
$$
Since for any $t\in \mathbb Q$, the common denominator of the numbers $\frac{(t)_n}{n!}$, $n=0, \ldots, k$, has at most geometric growth in $k$ (Siegel), it follows that Galochkin's condition is satisfied by the matrices $\frac{T(z)^n}{n!}A_n(z)$, $n=1, \ldots, k$, with $T(z)=\prod_{j=1}^p (\lambda_j-z)$.
\end{Remark}

\begin{prop} \label{theo:1} Let $f(z)\in \textup{NGA}\{0\}_0^{\mathbb C}$ be a non-zero solution of a $G$-operator $L\in \Qbar(z)[\frac d{dz}]$ of order $2$ which is reducible over $\Qbar(z)$. Then, for any factorization $L=MN$ with $M,N \in \Qbar(z)[\frac d{dz}]$ of order 1,

$(i)$ $M$ and $N$ are both G-operators,

$(ii)$ there exist a solution $g(z)\neq 0$ of $N$ (and thus of $L$) and a solution $k(z)$ of $M$ 
such that
\begin{equation} \label{eq:2}
f(z)=g(z)\int \frac{k(z)}{g(z)} dz.
\end{equation}
\end{prop}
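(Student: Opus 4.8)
The plan is to exploit the factorization $L = MN$ together with the known structure of order-1 $G$-operators from Proposition \ref{prop:1}. First I would invoke the general principle stated earlier in \S\ref{sec:1}: if $L$ is a $G$-operator and factors as $L = MN$ with $M, N \in \Qbar(z)[\frac{d}{dz}]$ of order 1, then both $M$ and $N$ are themselves $G$-operators. This immediately gives part $(i)$, and by Proposition \ref{prop:1}$(ii)$ both $M$ and $N$ are fuchsian with rational exponents; moreover their nonzero solutions have the explicit form $\delta\prod_j(\lambda_j-z)^{s_j}$ with $\delta \in \mathbb{C}^*$, $\lambda_j \in \Qbar$, $s_j \in \mathbb{Q}$, hence in particular they lie in $\textup{NGA}\{0\}_0^{\mathbb{C}}$ and are nonvanishing (as functions on a suitable slit neighbourhood of $0$).

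For part $(ii)$, I would argue as follows. Pick any nonzero solution $g(z)$ of $N$; since $Ng = 0$ we have $Lg = MNg = 0$, so $g$ is a solution of $L$. Now $f$ satisfies $Lf = MNf = 0$, so $Nf$ is a solution of the order-1 operator $M$; write $Nf = k(z)$, a solution of $M$ (possibly zero). Normalizing $N$ to be monic, $N = \frac{d}{dz} - a(z)$ with $a(z) \in \Qbar(z)$ (in fact of the shape $\sum_j s_j/(\lambda_j - z)$), the equation $Nf = k$ reads $f'(z) - a(z)f(z) = k(z)$, an inhomogeneous order-1 linear ODE. Since $g \neq 0$ solves the homogeneous equation $g' = a g$, variation of constants gives $\frac{d}{dz}\big(f/g\big) = (f' g - f g')/g^2 = (f' - af)/g = k/g$, whence $f/g = \int (k/g)\,dz$ up to an additive constant, i.e. $f(z) = g(z)\int \frac{k(z)}{g(z)}\,dz$ for an appropriate choice of the primitive. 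This is exactly \eqref{eq:2}.

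The main point requiring care — and the step I expect to be the only genuine subtlety — is making sure the manipulation $f/g$ and the integral $\int (k/g)\,dz$ make sense, i.e. that $g$ does not vanish identically and that everything can be carried out inside a fixed differential field or Picard–Vessiot extension containing $f$, $g$, $k$. Here I would use that $g$, being a nonzero solution of the order-1 fuchsian operator $N$, is of the form $\delta\prod_j(\lambda_j - z)^{s_j}$ and so is invertible in the relevant ring of Nilsson-type functions near $0$ (its reciprocal is $\delta^{-1}\prod_j(\lambda_j - z)^{-s_j}$, again of the same form). Thus $k/g$ is again a holonomic element of $\textup{NGA}\{0\}_0^{\mathbb{C}}$ — a product of an algebraic function with such an element — and admits a primitive of the same type, which differs from $f/g$ by a constant; absorbing that constant into the choice of primitive (as permitted by the convention on $\int u(z)\,dz$ fixed in the introduction) yields \eqref{eq:2}. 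The remaining assertion that $g$ is a solution of $L$, and that the choice of $g$ among solutions of $N$ is free, is immediate from $L = MN$.
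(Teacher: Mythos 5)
Your proof is correct and follows essentially the same route as the paper: part $(i)$ comes from the closure of $G$-operators under factorization stated in \S\ref{sec:1}, and part $(ii)$ from setting $k(z):=Nf(z)$ (a solution of $M$ since $Mk=MNf=Lf=0$) and solving the inhomogeneous order-1 equation $f'=\varpi f+k$ by variation of constants. The additional care you take about the non-vanishing of $g$ and the choice of primitive is consistent with the paper's conventions and does not change the argument.
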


\begin{proof}
Since $L$ of order $2$ is reducible over $\Qbar(z)$, there exist $\eta(z), \varpi(z), \nu(z)\in\Qbar(z)$, $\nu(z)\neq 0$, such that
$$
L=\nu(z)\Big(\frac{d}{dz}-\eta(z)\Big)\Big(\frac{d}{dz}-\varpi(z)\Big).
$$
$L$ being a $G$-operator, this is also the case of $M:=\nu(z)(\frac{d}{dz}-\eta(z))$ and $N:=\frac{d}{dz}-\varpi(z)$. 

We set $k(z):=f'(z)-\varpi(z)f(z)$, so that $Mk(z)=0$.~(\footnote{Notice that if $f(z)$ is a $G$-function, both $f(z)$ and $f'(z)$ are regular at $z=0$, and $\varpi(z)\in \Qbar(z)$, so that if $k(z)$ has a singularity at $z=0$, then it is a pole.})

We now have to solve for $f(z)$ the inhomogeneous equation $f'=\varpi f+k$. This is a well-known exercise. Let $g(z)$ be a non-zero solution of the homogeneous equation $y'=\varpi y$. Then, the general solution of $y'=\varpi y+k$ is of the form
$$
y(z)=g(z)\int \frac{k(z)}{g(z)} dz.
$$
In particular, $f(z)$ is of this form and the proof is complete. 
\end{proof}
\begin{Remark}\label{rem4}
 The converse of Proposition \ref{theo:1} holds: if $g$, $k$ are $G$-functions such that $Ng(z)=Mk(z)=0$, where $M,N \in \Qbar(z)[\frac d{dz}]$ are of order 1, then we may assume that $N = \frac{d}{dz}-a$ and $M=\frac{d}{dz}-b$, and any function $f$ defined by \eqref{eq:2b} 
satisfies $f'(z)-a(z)f(z)=k(z)$ so that $MNf(z)=0$, and its generalized expansion at $z=0$ is in $\textup{NGA}\{0\}_0^{\mathbb C}$. 

\smallskip

The operators $M$ and $N$ are not unique (even in the class of $G$-operators), as the classical factorization $\frac{d^2}{dz^2}=(\frac{d}{dz}-\frac{1}{z+\lambda})(\frac{d}{dz}+\frac{1}{z+\lambda})$, $\lambda\in \mathbb C$ arbitrary, shows; all these operators are $G$-operators when $\lambda\in \Qbar$. Hence the representation of $f(z)$ as in \eqref{eq:2} is not unique.
\end{Remark}

\bigskip

We are now able, using Proposition  \ref{theo:1}, to deduce   Proposition \ref{coro:1}.

We have $f'(z)=a(z)f(z)+b(z)$. Since $b(z)\neq 0$, $f(z)$ is then trivially a solution of the (reducible) differential operator
$$
L:=\Big( \frac{d}{dz}-\frac{b'(z)}{b(z)}\Big)\Big(\frac{d}{dz}-a(z) \Big) \in \Qbar(z)\Big[\frac{d}{dz}\Big]\setminus \{0\}.
$$
Because $f(z)\neq 0$, $f(z)$ is not of order 0 over $\Qbar(z)$ and because $b(z)\neq 0$ and $f(z)\notin \Qbar(z)$, $f(z)$ is not of order 1 over $\Qbar(z)$ either. In the latter case, assume on the contrary that $f'(z)=c(z)f(z)$ for some $c(z)\in \Qbar(z)$: we then have $a(z)f(z)+b(z)=c(z)f(z)$, so that either $f(z)\in \Qbar(z)$ or $b(z)=0$, which is impossible. Hence, $Lf(z)=0$ is of minimal order for $f(z)$ over $\Qbar(z)$. By Andr\'e's minimality theorem, it follows that $L$ is a $G$-operator, of which $f(z) \in \textup{NGA}\{0\}_0^{\mathbb C}$ is a solution. We are thus in the situation of Proposition \ref{theo:1}. More precisely, in the proof of that proposition, we may take $\varpi(z):=a(z)$, $\eta(z):=b'(z)/b(z)$, and $\nu(z):=1$; the function $k(z)$ defined there by $k(z):=f'(z)-\varpi(z)f(z)$ is thus equal to $b(z)$. This concludes the proof of Proposition \ref{coro:1}.

\bigskip

We end this section with a simple but interesting result which can be viewed as a converse to Andr\'e's minimality theorem when applied to a $G$-operator. We recall that $\Qbar$ is viewed as a sub-field of $\mathbb C$.

\begin{prop}\label{prop3} Any given $L\in \Qbar(z)[\frac{d}{dz}]$ admits a solution for which $L$ is of minimal order over $\Qbar(z)$.
\end{prop}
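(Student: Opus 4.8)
The plan is to work inside a Picard--Vessiot extension $\mathcal{E}$ of the equation $Ly(z)=0$ over $\mathbb{C}(z)$ and to exploit the fact that the coefficients of $L$ lie in the smaller field $\Qbar(z)$. Write $L$ for the order-$\mu$ operator and let $y_1,\dots,y_\mu\in\mathcal{E}$ be a $\mathbb{C}$-basis of its solution space. For each nonzero $y\in V:=\Span_{\mathbb C}(y_1,\dots,y_\mu)$ let $\operatorname{ord}_{\Qbar}(y)$ be the minimal order of a nonzero operator in $\Qbar(z)[\frac{d}{dz}]$ annihilating $y$ (this is well defined since $L$ itself annihilates $y$). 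Set $m:=\min_{0\neq y\in V}\operatorname{ord}_{\Qbar}(y)$; we must show $m=\mu$. Suppose for contradiction $m\le\mu-1$, and pick $y_0\neq0$ with $\operatorname{ord}_{\Qbar}(y_0)=m$ and a monic $M\in\Qbar(z)[\frac{d}{dz}]$ of order $m$ with $My_0=0$.

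First I would reduce to the case where $y_0$ generates, under $L$, a proper subspace. The key observation is that the solution space of $M$ inside $\mathcal{E}$ has dimension exactly $m<\mu$ (its solutions being among $y_1,\dots,y_\mu$ after extending scalars, since $\mathcal{E}$ is also a Picard--Vessiot ring for $M$ over $\mathbb{C}(z)$), so $W:=\ker M\cap V$ is a proper $\mathbb{C}$-subspace of $V$. The crucial point is now a descent/Galois-theoretic statement: because $M$ has coefficients in $\Qbar(z)$, the $\Qbar(z)$-span of the solutions of $M$ (equivalently, the $\Qbar$-structure on the Picard--Vessiot ring) is stable, and one can arrange a basis of solutions of $L$ that is adapted to $M$. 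Concretely, I would invoke the structure of $L$ as an operator over $\Qbar(z)$: if $M$ right-divides some iterate of $L$ or more precisely if $\gcd(L,M)$ (right gcd in $\Qbar(z)[\frac{d}{dz}]$) has order $\ge 1$, then that gcd $N$ is a nonzero operator in $\Qbar(z)[\frac{d}{dz}]$ of order $\le m$ dividing $L$ on the right, and every solution of $N$ is a solution of both $L$ and $M$.

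The heart of the argument: I claim $N:=\gcd_{\text{right}}(L,M)$ has order exactly $\mu$ or we get a contradiction another way. Since $My_0=0$ and $Ly_0=0$, the operator $N$ satisfies $Ny_0=0$, so $\operatorname{ord}(N)\ge\operatorname{ord}_{\Qbar}(y_0)=m$; but $N$ right-divides $M$ which has order $m$, so $\operatorname{ord}(N)=m$ and in fact $N=M$ up to a left factor, i.e. $M$ right-divides $L$ in $\Qbar(z)[\frac{d}{dz}]$. Thus the solution space of $M$ is a $\mathbb{C}$-subspace of that of $L$, of dimension $m<\mu$. Now the point I would push is: the quotient operator $\widetilde L := LM^{-1}\in\Qbar(z)[\frac{d}{dz}]$, of order $\mu-m\ge1$, has a nonzero solution $u$ in $\mathcal{E}$ with $\operatorname{ord}_{\Qbar}(u)<\mu-m$ by the \emph{induction hypothesis applied to $\widetilde L$}, and lifting $u$ through $M$ (solving $M v = u$ is an inhomogeneous equation over $\Qbar(z)$) produces a solution of $L$ whose $\Qbar(z)$-order is at most $m+\operatorname{ord}_{\Qbar}(u)$; choosing everything minimally and running the induction on $\mu$ one should be forced back to $m=\mu$, i.e. to $L$ irreducible over $\Qbar(z)$, in which case $L$ is already minimal for any of its nonzero solutions. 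So the real structure is an induction on the order $\mu$, with the irreducible case trivial.

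The main obstacle I expect is the lifting step and the bookkeeping of $\Qbar(z)$-orders through the inhomogeneous equation $Mv=u$: a priori the minimal operator over $\Qbar(z)$ for a solution $v$ of $Mv=u$ could have order as large as $\operatorname{ord}(M)+\operatorname{ord}_{\Qbar}(u)+1$ rather than $\le\operatorname{ord}(M)+\operatorname{ord}_{\Qbar}(u)$, which would break the induction; one must argue that because $u$ itself lies in the solution space of $L$ and $L=\widetilde L M$, the relevant operator is a right divisor of $L$ and hence of order $\le\mu$, and then extract from minimality of $m$ the desired contradiction. An alternative, possibly cleaner route avoiding explicit gcd computations is purely Galois-theoretic: the differential Galois group $G$ of $L$ over $\Qbar(z)$ acts on the $\Qbar$-points of the solution space $V_{\Qbar}$, and $\operatorname{ord}_{\Qbar}(y)$ for $y\in V_{\Qbar}$ equals $\dim_{\Qbar}(\Qbar\text{-span of the }G\text{-orbit of }y)$; one then notes that for a \emph{cyclic} $G$-module $V_{\Qbar}$ (which it is, being the solution space of a single scalar operator) there always exists a cyclic vector, i.e. a $y$ whose orbit spans everything, giving $\operatorname{ord}_{\Qbar}(y)=\mu$ — and the contrast with $\mathbb{C}(z)$ (cf. the example $z+\pi$ in the introduction) is exactly that over $\mathbb{C}(z)$ the Picard--Vessiot construction can collapse the order, whereas over $\Qbar(z)$ the cyclic-vector theorem for $\Qbar(z)[\frac{d}{dz}]$-modules applies directly.
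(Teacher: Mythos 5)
There is a genuine gap, in fact two. First, your opening reduction is wrong: you set $m:=\min_{0\neq y\in V}\operatorname{ord}_{\Qbar}(y)$ and claim the task is to show $m=\mu$, then argue by contradiction from $m\le\mu-1$. But the proposition only asserts that \emph{some} solution has full order; the minimum over all nonzero solutions is usually smaller. For $L=\frac{d^2}{dz^2}$ the constant solution $1$ has order $1$ over $\Qbar(z)$, so $m=1<2$, yet the proposition holds (via $z+\pi$). Hence no contradiction can be derived from $m\le \mu-1$, and the whole contradiction scaffolding collapses. Second, even reading your argument as a constructive induction (factor $L=\widetilde L M$ with $M$ the minimal operator of $y_0$, take $u$ with $\widetilde L$ minimal for $u$, lift through $Mv=u$), the lifting step fails exactly where you suspect: with $L=\frac{d^2}{dz^2}$, $M=\widetilde L=\frac{d}{dz}$, $u=1$, the lifts are $v=z+d$, and for $d\in\Qbar$ one has $\operatorname{ord}_{\Qbar}(v)=1$, not $2$. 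The order of the lift depends on the transcendence of the integration constants, and nothing in your argument controls that. Your alternative Galois-theoretic route has the same blind spot: you restrict to $\Qbar$-points of the solution space, but the full-order solution need not be a $\Qbar$-point (all $\Qbar$-points $az+b$ of $\ker \frac{d^2}{dz^2}$ have order $\le 1$, since the differential Galois group over $\Qbar(z)$ is trivial there); moreover the solution space is not in general a cyclic module for that group -- you are conflating cyclicity of $\Qbar(z)[\frac{d}{dz}]/\Qbar(z)[\frac{d}{dz}]L$ with cyclicity of the Galois representation.

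The missing idea is precisely a deliberate choice of transcendental coefficients. The paper's proof is short and elementary: pick an ordinary point $\alpha\in\Qbar$ and a local basis $f_1,\ldots,f_m\in\Qbar[[z-\alpha]]$ of solutions, choose $\omega_1,\ldots,\omega_m\in\C$ linearly independent over $\Qbar$ (e.g.\ $\omega_j=\pi^j$), and set $h=\sum_j\omega_j f_j$. If $M\in\Qbar(z)[\frac{d}{dz}]$ is minimal for $h$, then each Laurent coefficient of $Mh=\sum_j \omega_j Mf_j$ at $\alpha$ is a $\Qbar$-linear combination of the $\omega_j$ that vanishes, so all $Mf_j=0$; by $\C$-linear independence of the $f_j$ the order of $M$ is at least $m$, hence $M=L$ up to a unit. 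If you want to salvage your factorization approach, you would need to prove a lemma of this type anyway to control the integration constants in the lift, at which point the direct argument is simpler.
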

\begin{Remark} This is false if $\Qbar(z)$ is replaced by $\mathbb C(z)$. Consider for instance again $L=\frac{d^2}{dz^2}\in\mathbb C(z)[\frac{d}{dz}]$. The solutions of $L$ are $az+b$, $a,b\in \mathbb C$, and any particular function $az+b$ is solution of $(az+b)\frac{d}{dz}-a \in \mathbb C(z)[\frac{d}{dz}]$. This is of course another point of view on the above mentioned factorization of $\frac{d^2}{dz^2}$.
\end{Remark}
\begin{proof} We can assume that $L\neq 0$; we let $m\ge 1$ be the order of $L$. Let $\alpha\in \Qbar$ be an ordinary point of $L$ and let $f_1(z), \ldots, f_m(z)$ denote a local $\mathbb C$-basis of solutions of $L$ at $z=\alpha$ where each $f_j(z)$ is in $\Qbar[[z-\alpha]]$. Let $\omega_1, \ldots,\omega_m$ be $m$ complex numbers linearly independent over $\Qbar$ (for instance $\omega_j=\pi^j$) and consider the solution $h(z)=\sum_{j=1}^m \omega_jf_j(z)\neq 0$ of $L$. Let $M\in \Qbar(z)[\frac{d}{dz}]$ be of minimal order for $h(z)$. We have 
\begin{equation}\label{eq:omeMf}
Mh=\sum_{j=1}^m\omega_j Mf_j=0.
\end{equation}
Consider the Laurent series expansions $Mf_j(z)=\sum_{k=-K}^{\infty} \phi_{k,j}(z-\alpha)^k$, $j=1, \ldots, m$, $\phi_{k,j}\in \Qbar$ (where $K\ge 0$ can be chosen the same for all $j$). From \eqref{eq:omeMf}, we deduce the relations
$$
\sum_{j=1}^m \phi_{k,j}\omega_{j}=0, \quad \forall k\ge -K.
$$ 
The assumption on the $\omega_j$'s implies that $\phi_{k,j}=0$ for all $k\ge -K$ and all $j\in\{1,\ldots,m\}$. In other words, $Mf_j(z)=0$ for every $j$. By $\mathbb C$-linear independence of the $f_j$'s, we deduce that the order of $M$ is $m$. Thus $M=L$, up to multiplication by a non-zero element of $\Qbar(z)$.
\end{proof}

\section{Proof of Theorem \ref{theo:2}} \label{sec:2}

Let $f(z)\neq 0$ be an element of $\textup{NGA}\{0\}_0^{\mathbb C}$,  
holonomic of order $2$ over $\Qbar(z)$. 
Let $L\in \Qbar(z)[\frac d {dz}]$ of order~2 be such that 
$Lf(z)=0$: it is a $G$-operator by Andr\'e's minimality theorem. 
The differential Galois groups of $L$ over $\mathbb C(z)$ and $\Qbar(z)$ respectively are ``equal'' in the sense that they are defined by the same algebraic relations, ie the former is obtained from the latter by extension of scalars from $\Qbar$ to $\mathbb{C}$; see Proposition 1.3.2 in \cite[p. 19]{katz87} (due to Gabber) for the precise statement of this fact. Moreover, assuming that $f(z)$ and $f'(z)$ are algebraically dependent over $\mathbb C(z)$, the differential Galois group of $L$ over $\mathbb C(z)$ does not contain $\textup{SL}_2(\mathbb C)$. Therefore the differential Galois group of $L$ over $\Qbar(z)$ does not contain $\textup{SL}_2(\Qbar)$. Kovacic's classification \cite[\S 1.2, Theorem]{kovacic} (adapted to the case where the field of constants is $\Qbar$; see \cite{ulmerweil}) then implies that one of the following cases holds:

\medskip

Case 1: $L$ has a non-zero solution $g(z)$ such that $g'(z)=a(z)g(z)$ for some $a(z)\in \Qbar(z)$.

\medskip

Case 2: $L$ has a basis of solutions $g(z)k(z)$ and $h(z)k(z)$ such that $g(z)$ and $h(z)$ are algebraic over $\Qbar(z)$ and $k'(z)=a(z)k(z)$ for some $a(z)\in \Qbar(z)$.

\medskip

Case 3: $L$ has a basis of solutions $g(z)$ and $h(z)$ such that $g'(z)=a(z)g(z)$ and $h'(z)=b(z)h(z)$ where $a(z)$ and $b(z)$ are distinct quadratic functions, solutions of the same quadratic equation over $\Qbar(z)$.

\medskip

Before going on, we give some details on bases of solutions. In Cases 2 and 3, Kovacic's analysis provides {\em a priori} $\Qbar$-bases of solutions with the stated properties. We have to explain why these solutions also form $\mathbb C$-bases. This is in fact an immediate application of Corollary 1.13 in \cite[p. 10]{VDPSinger} but let us explain this in our situation. By Lemma 1.12 in \cite[p. 9]{VDPSinger}, 
two elements $u,v$ of a differential field with field of constants $\mathcal{C}$ (of characteristic 0) are linearly independent over $\mathcal{C}$ if and only if their wronskian $uv'-u'v$ is non-zero. Consider now a differential operator $M\in \Qbar(z)[\frac{d}{dz}]$ of order 2 with a $\Qbar$-basis of solutions $f,g$ in a Picard-Vessiot extension of $My(z)=0$ over $\Qbar(z)$. Notice that $f$ and $g$ are also elements of a Picard-Vessiot extension of $My(z)=0$ over $\mathbb C(z)$; since their wronskian is non-zero, $f$ and $g$ remain linearly independent over $\mathbb C$. Hence, $f$ and $g$ also form a $\mathbb C$-basis of $M$. This explains why in Cases 2 and 3, the bases of solutions of $L$ are also bases over $\mathbb C$, a fact that will be used in the discussions below.

\subsection{Discussion of Case 1}

The differential operator $N:=\frac{d}{dz}-a(z)\in \Qbar(z)[\frac d{dz}]$ is a right-factor of the $G$-operator $L\in \Qbar(z)[\frac d{dz}]$. Hence $L$ is reducible over $\Qbar(z)$ and we have $L=MN$ with $M,N\in \Qbar(z)[\frac d{dz}]$ both of order~1. We can thus apply Proposition~\ref{theo:1}: we have that $f(z)=g(z)\int k(z)/g(z) dz$ where $g(z)\neq 0$ and $k(z)$ are solutions of the $G$-operators $N$ and $M$ respectively. Now, the expressions of $g(z)$ and $k(z)$ given by Proposition~\ref{prop:1} show that $h(z):=k(z)/g(z)$ is also a solution of a $G$-operator of order 1. Moreover, the functions $f(z)$ and $g(z)$ are linearly independent over $\mathbb C$. Indeed, if on the contrary $c_1f(z)+c_2g(z)=0$ for some $c_1, c_2\in \mathbb C$ not both 0, then necessarily $c_1\neq 0$ and thus $f(z)$ would be of order $\le 1$ over $\Qbar(z)$ which is excluded. This also implies that $h(z)$ can not be zero. We are thus exactly in situation $(ii)$ of Theorem~\ref{theo:2}.

\subsection{Discussion of Case 2} We work in a suitable simply connected cut plane where all functions under consideration are analytic. The function $G:=gk$ is a solution of $L$, hence it is in $\textup{NGA}\{0\}_0^{\mathbb C}$. It follows that $k=G/g$ has moderate growth at its finite singularities and $\infty$. Hence, $\frac{d}{dz}-a(z)\in \Qbar(z)[\frac d{dz}]$ is fuchsian by Fuchs' criterion (see \cite[p. 55]{poole}), and we denote its finite singularities by $\lambda_j$, $j \in J$; they are all in $\Qbar$. We have 
$$
k(z)=\delta \prod_{j\in J} (\lambda_j-z)^{s_j}
$$
with $\delta\in \mathbb C^*$ and $s_j\in \mathbb C$ for all $j$. We now prove that $s_j\in \mathbb Q$. Indeed, in the local generalized expansion (around any $\beta \in \mathbb C$)
$$
\frac{G(z)}{g(z)}=\sum_{(\alpha,j,k)\in \mathbb S} \lambda_{\alpha,j,k} (z-\beta)^{\alpha} \log(z-\beta)^j G_{\alpha,j,k}(z-\beta) 
$$
obtained from those of $G$ and $g$, with $G_{\alpha,j,k}(z-\beta)\in \mathbb C[[z-\beta]]$, the exponents $\alpha$ are in $\mathbb Q$ because $g$ is an algebraic function and $G$ is a solution of the $G$-operator $L$ with rational exponents. Therefore $s_j\in\Q$ for any $j$, and $\delta^{-1}k(z)$ is algebraic over $\mathbb \Qbar(z)$.

In conclusion, $\delta^{-1}g(z)k(z)$ and $\delta^{-1}h(z)k(z)$ are algebraic functions over $\Qbar(z)$ and they form a basis of $L$, in accordance with $(i)$ of Theorem \ref{theo:2}.

\subsection{Discussion of Case 3} This case is more complicated. 
We work in a suitable simply connected cut plane where all functions under consideration are analytic, and fix an arbitrary determination of the square root function. The quadraticity assumption on $a$ and $b$ ensures the existence of $r,s\in \Qbar(z)$ such that $\sqrt{s}\notin \Qbar(z)$, $a=r+\sqrt{s}$ and $b=r-\sqrt{s}$. Since $g$ and $h$ are solutions of the $G$-operator $L$, they are in $\textup{NGA}\{0\}_0^{\mathbb{C}}$. From now on, we limit our discussion to the case of $g$ because the results can be transfered immediately to~$h$. 

From the equation $g'=(r+\sqrt{s})g$, we deduce that 
$$
g(z)=c\exp\Big(\int r(z) dz\Big) \exp\Big(\int \sqrt{s(z)} dz\Big)
$$
for some $c\in \mathbb C^*$, where $\int$ denotes arbitrary but fixed primitives of the functions involved. 
The value of the constant $c\neq 0$ is in fact arbitrary because we can of course replace $g$ by any of its non-zero constant multiples in the above discussion. We now explain how to assign a specific value to $c$ that will suit our goals. 
Notice that the function 
$\widetilde{g}(z):=\exp(-\int r(z) dz)g(z)=c\exp(\int \sqrt{s(z)} dz)$ satisfies $\widetilde{g}'=\sqrt{s}\widetilde{g}$ and that the local expansion at $z=0$ of $\int \sqrt{s(z)} dz$ can be written as $\alpha+\beta\log(z)+R(z)$ for some $\alpha\in \mathbb C$, $\beta\in \Qbar$ and $R \in \Qbar((z^{1/2}))$ with no constant term. We set $c:=\exp(-\alpha)$ (which now completely defines $g(z)$) so that the local expansion at $z=0$ of $\widetilde{g}(z)$ is in $z^\beta\cdot \exp(P(z^{-1/2}))\cdot \Qbar[[z^{1/2}]]$ for some $P\in\Qbar[z]$ such that $P(0)=0$. 
We shall prove below that $\exp(-\int r(z) dz)$ is algebraic over $\Qbar(z)$ (up to a mutiplicative constant), from which we shall deduce that $\widetilde{g}(z)\in \textup{NGA}\{0\}_0^{\Qbar}$. We will then prove that $\widetilde{g}(z)$ is algebraic over $\Qbar(z)$.

Using $g'=(r+\sqrt{s})g$ and $g''=((r+\sqrt{s})'+(r+\sqrt{s})^2)g$, it is immediate to check that $g(z)$ is a solution of the operator
\begin{equation}\label{eq:opM}
M:=\frac{d^2}{dz^2}-\Big(2r+\frac{s'}{2s}\Big)\frac{d}{dz}+r^2-r'+\frac{rs'}{2s}-s \in \Qbar(z)\Big[\frac d{dz}\Big].
\end{equation}
(Another solution of $M$ is $h(z)$.) 
Let us perform the euclidean right-division of $L$ by $M$: there exist $P\in \Qbar(z)\setminus\{0\}$ and $R\in \Qbar(z)[\frac{d}{dz}]$ such that $L=PM+R$ where the order of $R$ is 0 or 1. Because $Lg(z)=Mg(z)=0$, we have $Rg(z)=0$. Comparing with $g'=(r+\sqrt{s})g$, we deduce that necessarily $R=0$. Consequently, $L=PM$ and thus $M$ is also a $G$-operator for these particular $r(z)$ and $s(z)$.

Therefore $M$ is fuchsian with rational exponents: denoting by $z_1, \ldots, z_m\in\Qbar$ its (pairwise distinct) finite singularities, we have
\begin{equation}\label{eq:rs}
-2r(z)-\frac{s'(z)}{2s(z)} = \sum_{j=1}^m \frac{1-\rho_{1,j}-\rho_{2,j}}{z-z_j}
\end{equation}
where for each $j$, $\rho_{1,j}\in \mathbb Q$ and $\rho_{2,j} \in \mathbb Q$ are the local exponents of $M$ at $z_j$ (see \cite[p.~77, Eqs.~(20) and~(21)]{poole}). Writing $s(z)=\delta\prod_{j=1}^k (z-w_j)^{s_j}$ where $\delta\in \Qbar^*$ and $w_j\in \Qbar$, $s_j\in \mathbb Z\setminus\{0\}$ for each $j$, we have 
\begin{equation}\label{eq:s}
\frac{s'(z)}{s(z)}=\sum_{j=1}^k \frac{s_j}{z-w_j}.
\end{equation}
Using \eqref{eq:s} in \eqref{eq:rs}, we deduce that 
\begin{equation}\label{eq:r}
r(z)=\sum_{j=1}^\ell \frac{t_j}{z-\alpha_j}
\end{equation}
for some $\alpha_j\in \Qbar$ and $t_j\in \mathbb Q$ for every $j$. From \eqref{eq:r}, it follows as claimed that there exists $d\in \mathbb C^*$ such that 
$$
\exp\Big(\int r(z) dz\Big) = d\prod_{j=1}^\ell (z-\alpha_j)^{t_j},
$$
ie that $\exp(\int r(z) dz)$ is algebraic over $\Qbar(z)$ up to a multiplicative constant. 
Hence, $\widetilde{g}(z)=d^{-1}\prod_{j=1}^\ell (z-\alpha_j)^{-t_j}g(z) \in \textup{NGA}\{0\}_0^{\mathbb{C}}$. Now we have also proved that the local expansion at $z=0$ of $\widetilde{g}(z)$ belongs to $z^\beta \cdot \exp(P(z^{-1/2}))\cdot \Qbar[[z^{1/2}]]$ for some $\beta\in\Qbar$ and $P\in\Qbar[z]$ such that $P(0)=0$; it follows that $\widetilde{g}(z) \in \textup{NGA}\{0\}_0^{\Qbar}$.

Now, the function $\widetilde{g}(z)$ is solution of the operator
$
N:=\frac{d^2}{dz^2}-\frac{s'(z)}{2s(z)}\frac{d}{dz}-s(z).
$
(This is formally the operator obtained from $M$ in \eqref{eq:opM} with $r(z)=0$.) Notice that $\widetilde{g}(z)$ is of order 2 over $\Qbar(z)$ because it is not 0 and $\widetilde{g}'=\sqrt{s}\widetilde{g}$ rules out the possibility that it is of order 1 over $\Qbar(z)$. Therefore, $N$ is a $G$-operator by Andr\'e's minimality theorem. 
A key remark is that $1/\widetilde{g}(z)$ is also a solution of $N$. 
Therefore, $1/\widetilde{g}(z)\in \textup{NGA}\{0\}_0^{\mathbb C}$. Since the local expansion at $z=0$ of $1/\widetilde{g}(z)$ 
belongs to $z^{-\beta} \cdot \exp(-P(z^{-1/2}))\cdot \Qbar[[z^{1/2}]]$ for some $\beta\in\Qbar$ and $P\in\Qbar[z]$ such that $P(0)=0$, 
 we deduce that $1/\widetilde{g}(z)$ is in $\textup{NGA}\{0\}_0^{\Qbar}$. By the scholie in \cite[p. 123]{andrelivre} (the proof of which encompasses our case), we conclude that $\widetilde{g}(z)$ is algebraic over $\Qbar(z)$. Therefore, up to a multiplicative constant in $\mathbb C$, $g(z)$ is also algebraic over $\Qbar(z)$.

The same arguments show that $h(z)$ is algebraic over $\Qbar(z)$, up to a multiplicative constant in $\mathbb C$. 
In other words, there exist $u,v\in \mathbb C^*$ such that $ug(z)$ and $vh(z)$ are algebraic over $\Qbar(z)$, and form a basis of $L$. This is in accordance with $(i)$ of Theorem \ref{theo:2}, the proof of which is now complete.

\section{Holonomic Nilsson-Gevrey arithmetic series}    \label{sec:holo}

Andr\'e introduced in  \cite{andre}
 the class of  Nilsson-Gevrey arithmetic series of order 0, defined as functions of the form 
 \begin{equation}\label{eqholo2}
\sum_{(\alpha,j,k)\in \mathbb S} \lambda_{\alpha,j,k} z^{\alpha} \log(z)^j f_{\alpha,j,k}(z)
\end{equation}
where  $\mathbb S$ is a finite subset of $\mathbb Q\times \mathbb N\times \mathbb N$, $\lambda_{\alpha,j,k}\in \mathbb C$ and each $f_{\alpha,j,k}(z)$ is a   power series 
$
\sum_{n=0}^{\infty} a_n z^n $ with algebraic coefficients $a_n$ (which depend also on $ \alpha$, $j$, $k$)
such that:
\begin{enumerate} 
\item[--] there exists $C>0$ such that 
for any $\sigma \in \textup{Gal}(\overline{\mathbb{Q}}/\mathbb Q)$, we have $\vert \sigma(a_n)\vert \le C^{n+1}$;
\item[--]  there exists a sequence of positive integers $d_n$ such that $d_n \le C^{n+1}$ and $d_n a_m$ is an algebraic integer for all $m\le n$.
\end{enumerate}

In other words, $f_{\alpha,j,k}(z)$ satisfies the requirements to be a $G$-function, except that it is not assumed to be holonomic. In this section we prove Proposition \ref{propholo} stated in the introduction, namely: a Nilsson-Gevrey arithmetic series of order 0 is holonomic if, and only if, it can be written as \eqref{eqholo2} with $G$-functions $f_{\alpha,j,k}(z)$. Indeed we shall prove the following more precise result.
 
\begin{prop}\label{propholo2}
Let $J\geq 0$, and $\calA\subset\C$ be a finite subset such that $\alpha-\alpha'\in\Z$ with $\alpha,\alpha'\in\calA$ implies $\alpha=\alpha'$. For any pair $(\alpha,j)\in\calA\times\{0,\ldots,J\}$ let $K(\alpha,j)$ be a non-negative integer,  $(f_{\alpha,j,k})_{1\leq k \leq K(\alpha,j)}$ be a family of $K(\alpha,j)$ functions holomorphic at 0 with algebraic Taylor coefficients, and  $(\lambda_{\alpha,j,k})_{1\leq k \leq K(\alpha,j)}$ be complex numbers linearly independent over $\Qbar$. Then the function
$$
f(z) = \sum_{ \alpha \in \calA} \sum_{j=0}^J \sum_{k=1}^{ K(\alpha,j)}  \lambda_{\alpha,j,k} z^{\alpha} \log(z)^j f_{\alpha,j,k}(z)
$$
is holonomic if, and only if, all functions $ f_{\alpha,j,k}(z)$ are holonomic.
\end{prop}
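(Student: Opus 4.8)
\textbf{Proof proposal for Proposition \ref{propholo2}.}

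The plan is to prove the non-trivial implication: if $f$ is holonomic, then every $f_{\alpha,j,k}$ is holonomic. (The converse is immediate, since a finite $\C$-linear combination of products of holonomic functions with $z^\alpha$ and $\log(z)^j$, which are themselves holonomic, is holonomic.) The strategy is to \emph{isolate} each $f_{\alpha,j,k}$ from $f$ by a sequence of operations that preserve holonomy, exploiting the two structural features at our disposal: the spectral separation by the exponents $\alpha$ (no two distinct $\alpha\in\calA$ differ by an integer), and the $\Qbar$-linear independence of the $\lambda_{\alpha,j,k}$.

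First I would separate the exponents $\alpha$. Write $f(z) = \sum_{\alpha\in\calA} z^\alpha F_\alpha(z)$ where $F_\alpha(z) = \sum_{j=0}^J \log(z)^j \big(\sum_k \lambda_{\alpha,j,k} f_{\alpha,j,k}(z)\big)$ is a polynomial in $\log(z)$ with coefficients holomorphic at $0$. Since the $\alpha$ are pairwise non-congruent mod $\Z$, the summands $z^\alpha F_\alpha(z)$ occupy disjoint "sectors" of the generalized expansion; concretely, one can recover each $z^\alpha F_\alpha(z)$ from $f$ by applying a suitable differential operator with coefficients in $\C(z)$ (e.g. a polynomial in the Euler operator $z\frac{d}{dz}$ that annihilates $z^{\alpha'}$-type terms for $\alpha'\neq\alpha$ while acting invertibly on the $z^\alpha$-block after accounting for the $\log$ powers — here the operator $\big(z\frac{d}{dz}-\alpha\big)^{J+1}$ kills $z^\alpha(\log z)^j$ for all $j\le J$, so a Lagrange-interpolation-type combination of such operators over $\alpha\in\calA$ does the isolation). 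Since $f$ is holonomic and holonomy is preserved by applying operators in $\C(z)[\frac{d}{dz}]$ and by the relevant projections, each $z^\alpha F_\alpha(z)$, hence each $F_\alpha(z) = z^{-\alpha}(z^\alpha F_\alpha(z))$, is holonomic. Next I would peel off the powers of $\log(z)$: from a holonomic $F_\alpha(z) = \sum_{j=0}^J \log(z)^j G_{\alpha,j}(z)$ with $G_{\alpha,j}$ holomorphic at $0$, one recovers each $G_{\alpha,j}(z)$ by repeatedly applying $z\frac{d}{dz}$ and taking appropriate $\C(z)$-linear combinations (the coefficient of the top power $\log(z)^J$ is extracted first, since $z\frac{d}{dz}$ lowers the $\log$-degree), again staying within holonomic functions. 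Thus $G_{\alpha,j}(z) = \sum_{k=1}^{K(\alpha,j)} \lambda_{\alpha,j,k} f_{\alpha,j,k}(z)$ is holonomic for each $(\alpha,j)$.

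It remains to treat a single relation $G(z) = \sum_{k=1}^K \lambda_k f_k(z)$, holonomic, with $\lambda_k\in\C$ linearly independent over $\Qbar$ and $f_k$ having algebraic Taylor coefficients, and conclude that each $f_k$ is holonomic. Here I would invoke Gabber's theorem in the form used earlier in the paper (Proposition 1.3.2 of \cite{katz87}), or rather its underlying descent principle: the $\C$-vector space of solutions of the minimal operator $L\in\C(z)[\frac{d}{dz}]$ annihilating $G$ has a $\Qbar(z)$-structure, i.e. $L$ can be taken in $\Qbar(z)[\frac{d}{dz}]$ and its solution space over $\C$ is $\C\otimes_{\Qbar}$ (its solution space over $\Qbar$). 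Concretely: let $L\in\Qbar(z)[\frac{d}{dz}]$ be of minimal order for $G$ over $\C(z)$ (one checks it can be chosen over $\Qbar(z)$). Expanding $LG = \sum_k \lambda_k Lf_k = 0$ and comparing Taylor coefficients at a point where everything is holomorphic, each $Lf_k$ has algebraic coefficients, so the relation $\sum_k \lambda_k (\text{coefficient of }Lf_k) = 0$ together with $\Qbar$-linear independence of the $\lambda_k$ forces every Taylor coefficient of every $Lf_k$ to vanish, i.e. $Lf_k = 0$ for all $k$. Hence each $f_k$ is holonomic (annihilated by $L$).

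The main obstacle, and the step requiring the most care, is the second paragraph's isolation procedure: making precise that the projections onto the $z^\alpha$-components and onto the $\log(z)^j$-components can be realized by operators/operations preserving holonomy, and in particular that one genuinely stays inside the class of holonomic functions rather than merely holonomic "Nilsson-type" expressions. The cleanest route is to argue at the level of the generalized local expansion at $z=0$ (whose shape is controlled because $f$ is holonomic): its monodromy and formal structure let one read off that each $z^\alpha\log(z)^j f_{\alpha,j,k}$-block lies in the $\C(z)[\frac{d}{dz}]$-module generated by $f$, which is finite-dimensional over $\C(z)$ by holonomy of $f$; hence every element of that module, in particular each $G_{\alpha,j}$, is holonomic. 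The final descent step (third paragraph) is comparatively routine once one has the $\Qbar(z)$-minimal operator, the key input being Gabber's descent already cited in the paper.
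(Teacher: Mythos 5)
Your overall architecture matches the paper's: first reduce to showing that each block sum $f_{\alpha,j}=\sum_k\lambda_{\alpha,j,k}f_{\alpha,j,k}$ is holonomic, then use the $\Qbar$-linear independence of the $\lambda$'s to descend to the individual $f_{\alpha,j,k}$. The first stage, as you wrote it, relies on a mechanism that does not work: $(z\frac{d}{dz}-\alpha)^{J+1}$ annihilates the monomials $z^{\alpha}\log(z)^j$ but not $z^{\alpha}\log(z)^j g(z)$ for a non-constant holomorphic $g$, since $(z\frac{d}{dz}-\alpha)$ acts on the span of the $z^{\alpha+n}\log(z)^j$ as $n\cdot\mathrm{id}$ plus a nilpotent, which is invertible for every $n\geq 1$. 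So no polynomial in the Euler operator isolates the $\alpha$-blocks. The correct tool is the monodromy operator $T$ around $0$: if $Lf=0$ with $L\in\C(z)[\frac{d}{dz}]$ then $L(T^kf)=0$ for all $k$ because the coefficients of $L$ are single-valued, and on the space of Nilsson-type expansions the blocks are the generalized eigenspaces of $T$ for the pairwise distinct eigenvalues $e^{2\pi i\alpha}$ (here the hypothesis $\alpha-\alpha'\notin\Z$ enters). Your closing paragraph gestures at this, and it is essentially what the paper does (an induction on the number of blocks using $Tf-f$), so this stage is repairable; but note that ``each block lies in the $\C(z)[\frac{d}{dz}]$-module generated by $f$'' is neither needed nor obviously true --- what one gets is that each block is a $\C$-linear combination of the $T^kf$, hence a solution of $L$.

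The second stage contains a genuine gap. To run the ``compare Taylor coefficients and use $\Qbar$-linear independence'' argument you need a non-zero operator $L\in\Qbar(z)[\frac{d}{dz}]$ annihilating $G=\sum_k\lambda_k f_k$, and you assert that the minimal operator of $G$ over $\C(z)$ ``can be taken in $\Qbar(z)[\frac{d}{dz}]$'' by Gabber's descent. This is false: for $G=1+\pi z=1\cdot f_1+\pi\cdot f_2$ the minimal operator over $\C(z)$ is $(1+\pi z)\frac{d}{dz}-\pi$, and no non-zero $\C(z)$-multiple of it has coefficients in $\Qbar(z)$. Gabber's theorem as cited in the paper compares the differential Galois groups over $\Qbar(z)$ and $\C(z)$ of an operator \emph{already defined} over $\Qbar(z)$; it does not produce such an operator. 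What is true --- and is essentially equivalent to the statement you are trying to prove --- is that some non-minimal annihilator over $\Qbar(z)$ exists (in the example, $\frac{d^2}{dz^2}$), but assuming it here is circular. The paper closes this gap differently: the set $F$ of vectors $(a_1,\ldots,a_K)\in\C^K$ such that $\sum_k a_kh_k$ is holonomic is shown to be the zero locus of polynomials with algebraic coefficients (minors of a matrix whose entries depend $\Qbar$-linearly on the $a_k$), hence, being also a linear subspace, it is defined over $\Qbar$; since it contains the $\Qbar$-linearly independent vector $(\lambda_1,\ldots,\lambda_K)$ it must equal $\C^K$, so every $h_k$ is holonomic. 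Some argument of this kind (or a conjugation argument in the spirit of the proof of Proposition \ref{propalg}) is indispensable; the descent of the minimal operator that you invoke is not available.
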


This result shows that  Proposition \ref{propholo} can be adapted easily to Nilsson-Gevrey arithmetic series of negative 
order (for instance with $E$-functions instead of $G$-functions). More generally, the assumptions on the growth and denominators of the Taylor coefficients of the power series $  f_{\alpha,j,k}(z)$ are not necessary here.

\bigskip

Since each  $f_{\alpha,j,k}(z)$ has algebraic Taylor coefficients at 0, it is holonomic (i.e., solution of a   differential equation with coefficients in $\C(z)$) if, and only if, it is solution of a   differential equation with coefficients in $\Qbar(z)$.  Proposition \ref{propholo} shows that the same property holds with $f(z)$. 

\bigskip

\begin{proof}
If all $ f_{\alpha,j,k}(z)$ are holonomic, then so is $f$. To prove the converse, we assume that $f$ is holonomic and for each $(\alpha,j)\in\calA\times\{0,\ldots,J\}$  we consider
 \begin{equation}\label{eqholop3}
 f_{\alpha,j}(z) =  \sum_{k=1}^{ K(\alpha,j)}  \lambda_{\alpha,j,k}  f_{\alpha,j,k}(z) \in\C[[z]]
\end{equation}
so that
 \begin{equation}\label{eqholop1}
 f(z) = \sum_{ \alpha \in \calA} \sum_{j=0}^J  z^{\alpha} \log(z)^j f_{\alpha,j}(z).
\end{equation}

Our first step is to prove that $ f_{\alpha,j}$ is holonomic for any pair  $(\alpha,j)$. With this aim in view, for each $\alpha \in\calA$ we denote by $J_\alpha$ the largest integer $j$ such that $f_{\alpha,j}$ is not identically zero; shrinking $\calA$ if necessary we assume that for each $\alpha$ there exists such a $j$. Then we shall prove by induction on $S := \sum_{\alpha\in\calA} (1+J_\alpha)$ that if $f$ is holonomic in Eq. \eqref{eqholop1} then all $f_{\alpha,j}$ are.

This property holds trivially if $S=0$, because $\calA = \emptyset$ in this case. Let us assume that $S>0$, and that it holds for $S-1$. Since $S>0$ we have $\calA \neq \emptyset$; we choose $\alpha_0\in\calA$. Upon dividing by $z^{\alpha_0}$ we may assume that $\alpha_0=0$. We denote by $T$ the monodromy around the origin. Then Eq. \eqref{eqholop1} provides an expression of the form
 \begin{equation}\label{eqholop2}
 (Tf-f)(z) = \sum_{ \alpha \in \calA' } \sum_{j=0}^{J'_\alpha}  z^{\alpha} \log(z)^j g_{\alpha,j}(z) 
\end{equation}
with functions $g_{\alpha,j}(z) $ holomorphic at 0, where  $\calA\setminus\{0\}\subset\calA'\subset\calA$ and 
 $J'_\alpha = J_\alpha$ for any $\alpha\in \calA\setminus\{0\}$. Moreover, if $J_0=0$ then $0\not\in\calA'$; otherwise $0\in\calA'$ and $J'_0=J_0-1$. In both cases Eq. \eqref{eqholop2} corresponds to $S' =  \sum_{\alpha\in\calA'} (1+J'_\alpha)=S-1$. Using the induction  hypothesis we deduce that all $ g_{\alpha,j}$ are holonomic. Now since the  $f_{\alpha,j}$ and $ g_{\alpha,j}$ are holomorphic at 0, it is not difficult to express all functions $ g_{\alpha',j'}$ as linear combinations of the  $f_{\alpha,j}$ (and $f_{0,0}$ does not appear in this computation since $T f_{0,0}-f_{0,0}=0$). The underlying matrix is invertible (since it is block-wise triangular with non-zero diagonal coefficients) so that any $f_{\alpha,j}$ (with  $(\alpha,j)\neq {0,0}$) is a $\C$-linear combination of the holonomic functions  $ g_{\alpha',j'}$, and therefore is holonomic. Using Eq. \eqref{eqholop1} it follows that $  f_{0,0}$ is holonomic too: this concludes the inductive proof.
 
 \bigskip
 
 Let us move now to the second part of the proof of Proposition \ref{propholo2}. Recall from Eq.  \eqref{eqholop3} that 
 \begin{equation}\label{eqholop4}
 f_{\alpha,j}(z) =  \sum_{k=1}^{ K(\alpha,j)}  \lambda_{\alpha,j,k}  f_{\alpha,j,k}(z).
\end{equation}
For simplicity we write $h(z) =  f_{\alpha,j}(z)$, $h_k(z) = f_{\alpha,j,k}(z)$, and $\lambda_k =   \lambda_{\alpha,j,k} $; then $h = \sum_{k=1}^K \lambda_k h_k$ with $K =  K(\alpha,j)$ is holonomic, the complex numbers $\lambda_k$ are $\Qbar$-linearly independent, and $h_k\in\Qbar[[z]]$. Our aim is to prove that all $h_k$ are holonomic.

We consider the subspace $F$ of $\C^K$ consisting in all $\asoul = (a_1,\ldots,a_K)$ such that $\sum_{k=1}^K a_k h_k$ is holonomic. We are going to prove that $F$ is defined over $\Qbar$, i.e. that there exists a basis of $F$ consisting of elements of $\Qbar^K$ (this is equivalent to the existence of a system of linear equations with algebraic coefficients that defines $F$: see \cite{Bourbaki}).
Since $(\lambda_1,\ldots,\lambda_K)\in F$ and the  $\lambda_k$ are $\Qbar$-linearly independent, this implies $F=\C^K$ so that all $h_k$ are holonomic.

Since $F$ is finite-dimensional, there exists a non-zero differential operator $L\in \C[z, \frac{d}{dz}]$ such that $L( \sum_{k=1}^K a_k h_k)=0$ for any $\asoul\in F$. Let $\mu$ and $\delta$ denote, respectively, the order and degree of $L$. We consider the subspace $V$ of $\C[[z]]$ spanned by the power series $z^i (  \frac{d}{dz} )^j h_k $ with $0\leq i \leq \delta$, $0\leq j \leq \mu$, $1 \leq k \leq K$. Since $h_k\in \Qbar[[z]]$ for any $k$, this subspace $V$ is defined over $\Qbar$: there exists a basis $(v_1,\ldots,v_N)$ of $V$ such that $v_\ell\in \Qbar[[z]]$ for any $\ell$. For any $\asoul\in \C^K$ we denote by $M(\asoul)\in M_{N, (\mu+1)(\delta+1)}(\C)$ the matrix whose columns are given by the coordinates of $ z^i (  \frac{d}{dz} )^j ( \sum_{k=1}^K a_k h_k) $ in the basis $(v_1,\ldots,v_N)$; then each coefficient of $M(\asoul)$ is a $\Qbar$-linear combination of $a_1$, \ldots, $a_K$ (with coefficients independent from $\asoul$). Now for $\asoul\in\C^K$ the following assertions are equivalent :
\begin{itemize}
\item[$\bullet$] $\asoul\in F$
\item[$\bullet$] $\sum_{k=1}^K a_k h_k$ is annihilated by a non-zero differential operator of degree at most $\delta$ and order at most $\mu$
\item[$\bullet$] the columns of $M(\asoul)$ are linearly dependent (over $\C$)
\item[$\bullet$] $M(\asoul)$ has rank less than $(\mu+1)(\delta+1)$
\item[$\bullet$] All minors of size $(\mu+1)(\delta+1)$ of $M(\asoul)$ are equal to 0.
\end{itemize}
Now each minor of  $M(\asoul)$ is a polynomial in $a_1$, \ldots, $a_K$ with algebraic coefficients. Therefore $F$ is the zero locus in $\C^M$ of a finite family of polynomials with algebraic coefficients. Since $F$  is also a vector subspace of $\C^M$, as such it is defined over $\Qbar$. This concludes the proof of Proposition \ref{propholo2}.
\end{proof}

\section{A result on algebraic functions} \label{sec:3}

While searching for a proof of Theorem~\ref{theo:2}, we proved a result of independent interest which is not used in the paper. We have not seen it in the literature. M. Singer could not find it either and he sent us his own proof based on differential Galois theory. Our approach presented below is different.

\begin{prop} \label{propalg}
 Let $f(z)$ be a function algebraic over $\C(z)$, solution of a differential equation $Ly(z)=0$ with $L\in \Qbar(z)[\frac{d}{dz}]\setminus\{0\}$. Then there exist complex numbers $\lambda_1,\ldots,\lambda_p$ and functions $f_1(z),\ldots,f_p(z)$ algebraic over $\Qbar(z)$ such that $Lf_i(z)=0$ for each $i$ and 
$$f(z)=\sum_{i=1}^p \lambda_i f_i(z).$$
\end{prop}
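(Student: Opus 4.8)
The plan is to exploit the fact that the minimal polynomial of $f$ over $\C(z)$ can be taken with algebraic coefficients, and then to realize the various branches of $f$ as $\C$-linear combinations of a fixed finite list of functions algebraic over $\Qbar(z)$. Concretely, let $P(X)\in\C(z)[X]$ be the minimal polynomial of $f$. First I would argue that $f$ actually satisfies a polynomial equation over $\Qbar(z)$: since $f$ solves $Ly=0$ with $L\in\Qbar(z)[\frac{d}{dz}]$, the $\C(z)$-vector space spanned by $1,f,f^2,\ldots$ is finite dimensional and stable under $\frac{d}{dz}$; more to the point, the whole orbit of $f$ under $\mathrm{Gal}(\overline{\C(z)}/\Qbar(z))$ consists of solutions of $Ly=0$ (because $L$ has coefficients in $\Qbar(z)$), and this orbit is finite since $f$ is algebraic. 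Let $g_1=f,g_2,\ldots,g_m$ be the distinct elements of this Galois orbit; each $g_i$ is algebraic over $\Qbar(z)$, each satisfies $Lg_i=0$, and the coefficients of $\prod_i(X-g_i)$ are $\Qbar(z)$-rational. So the minimal polynomial of $f$ over $\Qbar(z)$ has degree $m$ and its roots are all solutions of $Ly=0$.

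Next I would compare this with the minimal polynomial over $\C(z)$. Write $Q(X)=\prod_{i=1}^m(X-g_i)\in\Qbar(z)[X]$; then the minimal polynomial $P$ of $f$ over $\C(z)$ divides $Q$ in $\C(z)[X]$, say $Q=P\cdot R$. The key point is that $f$ is a $\C$-linear combination of $g_1,\ldots,g_m$ — indeed trivially $f=g_1$, so the statement would already follow with $p=1$! That cannot be the intended content, so the genuine difficulty must be that the hypothesis only gives $L\in\Qbar(z)[\frac{d}{dz}]$ and $f$ algebraic over $\C(z)$, but $f$ is \emph{not} assumed algebraic over $\Qbar(z)$; a priori the coefficients of $P$ live in $\C(z)$ and involve transcendental constants. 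So the real task is: given that $f$ is algebraic over $\C(z)$ and holonomic over $\Qbar(z)$, deduce it is a $\C$-linear combination of functions algebraic over $\Qbar(z)$. The plan is to consider the finite-dimensional $\C$-vector space $W$ spanned by $f$ and all its conjugates over $\C(z)$, intersect with the (also finite-dimensional) solution space of $Ly=0$ in a Picard--Vessiot extension, and show the relevant space is defined over $\Qbar$ — this is exactly the style of argument used in the proof of Proposition~\ref{propholo2} above, and of Proposition~\ref{prop3}.

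Here is the concrete strategy. Let $\mathcal S$ be the $\C$-solution space of $Ly=0$ (finite dimensional, say of dimension $\mu$), viewed inside a Picard--Vessiot extension over $\C(z)$; since $L\in\Qbar(z)[\frac{d}{dz}]$, $\mathcal S$ has a basis $e_1,\ldots,e_\mu$ of functions whose germs at an ordinary point $\alpha\in\Qbar$ lie in $\Qbar[[z-\alpha]]$. Expand $f=\sum_j c_j e_j$ with $c_j\in\C$. Now use that $f$ is algebraic over $\C(z)$: let $A(z)\in\C[z]\setminus\{0\}$ be such that $A(z)P(f(z),z)\equiv 0$ with $P$ the minimal polynomial; considering the Taylor coefficients at $\alpha$ of this identity, and letting $\mathbf c=(c_1,\ldots,c_\mu)$ vary, the set $F$ of $\mathbf c\in\C^\mu$ for which $\sum_j c_j e_j$ is algebraic over $\C(z)$ of degree at most $\deg_X P$ is the zero locus of a family of polynomials whose coefficients are themselves polynomial in $\mathbf c$ with \emph{algebraic} coefficients (because the $e_j$ have algebraic Taylor coefficients, and algebraicity of a power series of bounded degree is detected by vanishing of finitely many resultant-type/Hankel-determinant conditions on its coefficients, with coefficients in $\Qbar$). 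Hence $F$ is a $\Qbar$-defined algebraic subset of $\C^\mu$; the linear span of $F$ over $\C$ is then a $\C$-subspace defined over $\Qbar$, so it has a $\Qbar$-basis $\mathbf c^{(1)},\ldots,\mathbf c^{(p)}$. Setting $f_i=\sum_j c^{(i)}_j e_j$, each $f_i$ lies in the $\Qbar$-span of functions with algebraic Taylor coefficients that solve $Ly=0$, hence is holonomic over $\Qbar(z)$, and being in (the span of) $F$ it is algebraic over $\C(z)$ — and a function holonomic over $\Qbar(z)$ and algebraic over $\C(z)$ with Taylor coefficients in $\Qbar$ is algebraic over $\Qbar(z)$ (its conjugates over $\Qbar(z)$ are finitely many solutions of $Ly=0$, among which it sits). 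Writing $\mathbf c=\sum_i\xi_i\mathbf c^{(i)}$ with $\xi_i\in\C$ gives $f=\sum_i\xi_i f_i$, which is the desired conclusion.

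The main obstacle I expect is the bookkeeping in the claim that algebraicity of $\sum_j c_j e_j$ (of bounded degree) over $\C(z)$ is cut out by $\Qbar$-polynomial conditions on $\mathbf c$: one must fix a bound on the degree and on the degrees of the coefficients of the putative minimal polynomial (using that $f$ itself gives such a bound, and that conjugation cannot increase it), then phrase "there exists a nonzero polynomial relation of these bounded sizes" as a rank condition on an explicit matrix built from the Taylor coefficients of the $e_j$ — exactly as in the minor-vanishing argument in the proof of Proposition~\ref{propholo2}. The second delicate point is the final implication "holonomic over $\Qbar(z)$ + algebraic over $\C(z)$ + algebraic Taylor coefficients $\Rightarrow$ algebraic over $\Qbar(z)$", which follows by taking the Galois orbit over $\Qbar(z)$ (finite, contained in $\mathcal S$) and noting the elementary symmetric functions of the orbit are both $\Qbar(z)$-rational and algebraic functions, hence in $\Qbar(z)$; I would spell this out carefully since it is where the hypotheses genuinely interlock.
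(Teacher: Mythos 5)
Your proposal is essentially correct, but it takes a genuinely different route from the paper. Setting aside your false start (the Galois-orbit argument of your first paragraph indeed fails, since $f$ is not assumed algebraic over $\Qbar(z)$, and you rightly retract it), your actual strategy is: expand $f$ in a basis $(e_1,\ldots,e_\mu)$ of solutions with algebraic Taylor coefficients at an ordinary point $\alpha\in\Qbar$, show that the set $F$ of coefficient vectors $\mathbf{c}$ for which $\sum_j c_j e_j$ admits a non-zero annihilating polynomial of bounded bidegree is cut out by minors that are polynomials in $\mathbf{c}$ with algebraic coefficients (hence is a Zariski-closed subset of $\C^\mu$ defined over $\Qbar$), and conclude by taking a $\Qbar$-basis of its linear span. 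This is precisely the mechanism of the paper's proof of Proposition~\ref{propholo2} (bounded-order holonomicity as a rank condition), transposed from holonomicity to algebraicity, and it does close, provided you attend to two points you partly flag yourself: (a) $F$ is only an algebraic set, not a subspace, so to get a $\Qbar$-basis of $\Span_{\C}F$ consisting of \emph{algebraic} solutions you should either invoke Zariski-density of the $\Qbar$-points of a $\Qbar$-defined algebraic set and pick the basis inside $F\cap\Qbar^\mu$, or note that $\Span_{\C}F$ is contained in the subspace of all algebraic solutions; and (b) the descent step ``algebraic over $\C(z)$ with algebraic Taylor coefficients at $\alpha\in\Qbar$ implies algebraic over $\Qbar(z)$'' is cleanest via linear algebra on the space of annihilating polynomials of bounded bidegree (a non-zero solution space of a $\Qbar$-linear system has a non-zero $\Qbar$-point), rather than via your Galois-orbit sketch; the paper uses this same fact. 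The paper's own proof of Proposition~\ref{propalg} is quite different: it splits the solutions with algebraic Taylor coefficients into the $\Qbar(z)$-algebraic part and a complement $h_1,\ldots,h_q$, and shows no non-trivial $\C$-combination of the $h_i$ can be algebraic over $\C(z)$ by a specialization argument — take a minimal support $I$, a transcendental ratio $\mu_{i_1}/\mu_{i_0}$, and a field embedding $\sigma$ moving it; applying $\sigma$ to the polynomial relation yields a second algebraic combination whose difference with the first contradicts minimality. Your approach buys uniformity and stays entirely within the ``defined over $\Qbar$'' linear-algebra framework already set up in \S\ref{sec:holo}; the paper's avoids quantifying degrees of annihilating polynomials at the price of the transcendence-basis and specialization machinery.
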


\begin{proof} Let us fix a simply connected cut plane which does not contain any singularity of $L$, and on which a determination of $\log(z)$ is fixed. We choose a non-singular point $z_0\in\Qbar$ in this cut plane. A solution of $Ly=0$ will be considered as a function on this cut plane, and 
 identified with its Taylor expansion at $z_0$. Notice that if such a solution has algebraic Taylor coefficients at $z_0$, then it is algebraic over $\C(z)$ if, and only if, it is algebraic over $\Qbar(z)$. 

Let $p$ denote the order of $L$, and $(f_1,\ldots,f_p)$ be a basis of solutions of $Ly=0$, with algebraic Taylor coefficients at $z_0$. Denote by $\calA$ the $\Qbar$-vector space of all functions algebraic over $\Qbar(z)$, and let 
$$V = \calA \cap \Span_{\Qbar} (f_1,\ldots,f_p) $$
denote the set of solutions of $L$ that are algebraic over $\Qbar(z)$ and have algebraic Taylor coefficients at $z_0$. Let $(g_1,\ldots,g_r)$ be a basis of this $\Qbar$-vector space, and $h_1,\ldots,h_q$ be such that $(g_1,\ldots,g_r, h_1,\ldots,h_q)$ is a $\Qbar$-basis of the space $ \Span_{\Qbar} (f_1,\ldots,f_p) $ of solutions of $Ly=0$ with algebraic Taylor coefficients at $z_0$. We have $0\leq r,q\leq p$ and $r+q=p$. If $q=0$ then $r=p$ and $ \Span_{\Qbar} (f_1,\ldots,f_p) = V\subset\calA$: all functions $f_1,\ldots,f_p$ are algebraic over $\Qbar(z)$, and Proposition \ref{propalg} is proved.

Therefore we may assume that $q\geq 1$. There exist $\kappa_1,\ldots,\kappa_p\in\C$ such that 
$$f = \kappa_1h_1+ \ldots \kappa_qh_q + \kappa_{q+1}g_1+ \ldots \kappa_pg_r.$$
Since $ g_1,\ldots,g_r$ and $f$ are algebraic over $\C(z)$, so is $ \kappa_1h_1+ \ldots \kappa_qh_q = f - \kappa_{q+1}g_1 - \ldots \kappa_pg_r$. Let $\cali$ denote the set of all $I\subset\unq$ for which there exist complex numbers $\mu_i$, $i\in I$, not all zero, such that $\sum_{i\in I} \mu_i h_i(z)$ is algebraic over $\C(z)$. If $\kappa_i = 0$ for any $i\in \unq $ then $f = \kappa_{q+1}g_1 + \ldots+ \kappa_pg_r$ and Proposition \ref{propalg} is proved. Otherwise we have $\unq\in\cali$ by taking $\mu_i=\kappa_i$ for any $i$, and we shall deduce a contradiction by considering an element $I$ of $\cali$ with minimal cardinality.

If $I = \{i\}$ for some $i$, then $h_i$ is algebraic over $\C(z)$; since $h_i$ has algebraic Taylor coefficients at $z_0$, it is algebraic over $\Qbar(z)$, so that $h_i\in V = \Span_{\Qbar} (g_1,\ldots,g_r) $: this contradicts the definition of $h_1,\ldots,h_q$.

Therefore $\Card I \geq 2$. Let us choose $i_0\in I$; by minimality of $I$ we have $\mu_{i_0}\neq 0$. Let us prove the $\mu_i/\mu_{i_0}$ is transcendental for at least one $i\in I$. Indeed, if all these numbers were algebraic, then $\mu_{i_0}^{-1} \sum_{i\in I} \mu_i h_i(z)$ would have algebraic Taylor coefficients at $z_0$ while being algebraic over $\C(z)$. Therefore it would be algebraic over $\Qbar(z)$, and belong to $V = \calA \cap \Span_{\Qbar} (f_1,\ldots,f_p) $. Since it is a non-zero element of $\Span_{\Qbar} (h_1,\ldots,h_q)$ this would contradict the definition of $ h_1,\ldots,h_q$.

We have proved that there exists $i_1\in I$ such that $\mu_{i_1}/\mu_{i_0}$ is transcendental. 
Moreover, there exists $P(z,X) = \sum_{k=0}^t P_k(z)X^k \in \C[z,X]\setminus \{0\}$ such that $P(z, \sum_{i\in I} \mu_i h_i(z))=0$. In other words, all Taylor coefficients at $z_0$ of this function of $z$ are zero. Notice that each such coefficient is a polynomial with algebraic coefficients in the $\mu_i$, $i\in I$, and the coefficients of the $P_k(z)$. Let $\K$ denote a sub-field of $\C$, of finite transcendence degree over $\Qbar$, that contains the $\mu_i$, $i\in I$, and the coefficients of the $P_k(z)$. Denote by $d$ the transcendence degree of $\K$ over $\Qbar$. Since $\mu_{i_1}/\mu_{i_0}$ is transcendental and belongs to $\K$, we have $d\geq 1$. Let $\theta_1,\ldots, \theta_d$ denote a transcendence basis of $\K$ over $\Qbar$ (see for instance \cite[p.~109, Definition 9.8]{Milne}). Since $\C$ is not coutable, there exist $\alpha_1,\ldots, \alpha_d\in \C$ such that 
$\theta_1,\ldots, \theta_d, \alpha_1,\ldots, \alpha_d$ are algebraically independent over $\Qbar$. Denote by $\sigma : \Qbar(\theta_1,\ldots, \theta_d) \to \Qbar(\alpha_1,\ldots, \alpha_d)$ the morphism of $\Qbar$-algebras defined by $\sigma(\theta_j)=\alpha_j$ for any $j\in \{1,\ldots, d \}$; in other words, we have $\sigma(R( \theta_1,\ldots, \theta_d) ) = R( \alpha_1,\ldots, \alpha_d)$ for any $R\in \Qbar(X_1,\ldots,X_d)$. Since $\K$ is an algebraic extension of $\Qbar( \theta_1,\ldots, \theta_d)$, $\sigma$ can be extended to a morphism $\K\to\LL$ still denoted by $\sigma$, where $\LL$ is an algebraic extension of $\Qbar( \alpha_1,\ldots, \alpha_d)$ 
(see \cite[Proposition 2.2 and Theorem 6.8]{Milne}).
If $\sigma(\mu_{i_1}/\mu_{i_0}) = \mu_{i_1}/\mu_{i_0}$, then this element belongs to both $\K$ and $\LL$, so that it is algebraic over both $\Qbar(\theta_1,\ldots, \theta_d) $ and $ \Qbar(\alpha_1,\ldots, \alpha_d)$. Since $\theta_1,\ldots, \theta_d, \alpha_1,\ldots, \alpha_d$ are algebraically independent over $\Qbar$, this implies $\mu_{i_1}/\mu_{i_0}\in\Qbar$, which is a contradiction.
Therefore we have $\sigma(\mu_{i_1}/\mu_{i_0}) \neq \mu_{i_1}/\mu_{i_0}$. 

Now recall that all Taylor coefficients at $z_0$ of $P(z, \sum_{i\in I} \mu_i h_i(z))$ are zero, and that $P(z,X)\in \K[z,X]\setminus \{0\}$ by construction of $\K$. Denote by $P^\sigma(z,X)\in \LL[z,X]\setminus \{0\}$ the polynomial obtained from $P(z,X)$ by applying $\sigma$ to all coefficients. Recall that all Taylor coefficients at $z_0$ of the functions $h_i$ are algebraic, and accordingly invariant under $\sigma$.
Therefore any Taylor coefficient at $z_0$ of $P^\sigma(z, \sum_{i\in I} \sigma(\mu_i) h_i(z))$ is the image under $\sigma$ of the corresponding coefficient of $P(z, \sum_{i\in I} \mu_i h_i(z))$, which is zero. In other words, we have $P^\sigma(z, \sum_{i\in I} \sigma(\mu_i) h_i(z))=0$: the function $ \sum_{i\in I} \sigma(\mu_i) h_i(z)$ is algebraic over $\C(z)$. Therefore the function
$$h(z) = \frac1{\mu_{i_0}} \sum_{i\in I} \mu_i h_i(z) - \frac1{\sigma(\mu_{i_0})} \sum_{i\in I} \sigma(\mu_i) h_i(z)$$
is also algebraic over $\C(z)$. It is a linear combination of the $h_i$, $i\in I \setminus \{i_0\}$: by minimality of $I$, it has to be 0. Since $h_1,\ldots,h_q$ are linearly independent over $\C$ (because they are over $\Qbar$ and they have algebraic Taylor coefficients at $z_0$), we deduce that all coefficients are 0, so that $\frac{\mu_{i_1}}{\mu_{i_0}} = \frac{\sigma(\mu_{i_1})} {\sigma(\mu_{i_0})}$ which is a contradiction. 

This concludes the proof of Proposition \ref{propalg}.
\end{proof}

\def\refname{Bibliography}

\end{document}